\documentclass[12pt]{amsart}

\usepackage{amssymb,amsfonts,amsmath,amsopn,amstext,amscd,color,latexsym,mathrsfs,skak,stackrel,url,verbatim,amsthm}
\usepackage{mathabx}
\usepackage[utf8]{inputenc}
\usepackage[all, cmtip]{xy}
\usepackage{mathtools}
\usepackage{enumitem}
\usepackage[active]{srcltx}
\usepackage{cite}
\usepackage[pdftex, colorlinks=true,linkcolor = blue, citecolor = blue, linktoc=all]{hyperref}
\usepackage{soul}
\usepackage{cleveref}
\usepackage{quiver}
\makeatletter
\DeclareRobustCommand
  \myvdots{\vbox{\baselineskip4\p@ \lineskiplimit\z@
    \hbox{.}\hbox{.}\hbox{.}}}
\makeatother

\theoremstyle{plain}
\newtheorem{thm}{\bf Theorem}[section]
\newtheorem{lem}[thm]{\bf Lemma}
\newtheorem{cor}[thm]{\bf Corollary}
\newtheorem{prop}[thm]{\bf Proposition}

\theoremstyle{remark}

\newtheorem{rem}[thm]{\bf Remark}

\theoremstyle{definition}
\newtheorem{defn}[thm]{\bf Definition}

\numberwithin{equation}{thm}

\newcommand{\bbC}{{\mathbb C}}

\newcommand{\bbH}{{\mathbb H}}

\newcommand{\bbP}{{\mathbb P}}
\newcommand{\bbQ}{{\mathbb Q}}

\newcommand{\bbZ}{{\mathbb Z}}

\newcommand{\cA}{{\mathcal A}}

\newcommand{\cC}{{\mathcal C}}

\newcommand{\cE}{{\mathcal E}}

\newcommand{\cL}{{\mathcal L}}

\newcommand{\cM}{{\mathcal M}}

\newcommand{\cO}{{\mathcal O}}

\newcommand{\cT}{{\mathcal T}}

\newcommand{\cZ}{{\mathcal Z}}

\newcommand{\rD}{{\rm D}}

\newcommand{\rH}{{\rm H}}

\newcommand{\sA}{{\mathscr A}}

\newcommand{\sC}{{\mathscr C}}

\newcommand{\sM}{{\mathscr M}}

\newcommand{\oZ}{\overline{Z}}

\newcommand*{\sheafhom}{\mathrm{H}\kern -.5pt om}

\DeclareMathOperator{\Perv}{Perv}

\DeclareMathOperator{\Ram}{Ram}
\DeclareMathOperator{\CC}{CC}
\DeclareMathOperator{\id}{id}
\DeclareMathOperator{\tr}{tr}
\DeclareMathOperator{\Supp}{Supp}

\newcommand{\csM}{\overline{\sM}}
\newcommand{\csA}{\overline{\sA}}

\title[Log Riemann-Hurwitz inequality]{A Higher dimensional log Riemann--Hurwitz inequality and rigidity of covers}
\author{Donu Arapura}
\address{Department of Mathematics, Purdue University,
150 N. University Street, West Lafayette, IN 47907, U.S.A.}
\email{arapura@purdue.edu}
\author{Chikako Mese}
\address{Department of Mathematics, Johns Hopkins University, 3400 N. Charles Street, Baltimore, MD 21218, U.S.A.}
\email{cmese1@jhu.edu}
\author{Deepam Patel}
\address{Department of Mathematics, Purdue University,
150 N. University Street, West Lafayette, IN 47907, U.S.A.}
\email{patel471@purdue.edu}
\date{July 2026}

\date{\today}

\begin{document}

\begin{abstract}
Let $\oZ$ be a smooth projective variety with an simple normal crossing divisor $D$ such that $\Omega_{\oZ}^1(\log D)$ is nef. We prove that if $Y\subset Z:=\overline Z\setminus D$ is a smooth
closed subvariety of dimension $n$ and $P$ is a perverse sheaf on
$Y$ of generic rank $r$, then
$
  \chi(Y,P)\geq r(-1)^n\chi(Y).
$
In particular, $\chi(Y,P)>0$ if $P$ has full support and
$\chi(Y)\neq0$.
Applying this to the trace-zero part of a finite direct image
yields a logarithmic Riemann--Hurwitz inequality: any finite surjective morphism
$f\colon X\to Y$ of degree $d$ with $X$ smooth satisfies
$(-1)^n\chi(X)\ge d\,(-1)^n\chi(Y)$, the difference being an explicit sum of
nonnegative intersection numbers. When $(-1)^n\chi(Y)>0$ this forces any such
$f$ with $\chi(X)=\chi(Y)$ to be an isomorphism. We verify the nef hypothesis for subvarieties of semiabelian
varieties, and for
$\overline{\mathscr M}_{g,n}$---the moduli of curves, obtaining in particular
that every finite surjective self-morphism of a moduli space of curves with
level structure is an isomorphism.

\end{abstract}

\maketitle
\tableofcontents
\section{Introduction}

The purpose of this note is twofold. Our  primary goal  is to  give a higher dimensional generalization of an inequality for Euler characteristics for curves. We will say more about this  below. The secondary goal is to give a relatively simple proof of a refinement of the inequality
\[
  \chi^{\mathrm{orb}}(\sM_{g,n},P)\geq0,
\]
for perverse sheaves $P$, obtained by two of the authors in \cite{ArapuraPatel}. The refinement asserts that the above inequality is strict when $P$ has full support on the moduli stack of complex genus-$g$ curves with $n$ marked points, provided $2g-2+n>0$ (see Theorem~\ref{thm:strictpositivityMgn}).
The proof hinges on a fact stated without proof in  \cite{Litt} that the logarithmic cotangent bundle of
$\csM_{g,n}$ is nef. We include a proof of this statement as well. These results are applied to rigidity of covers of various moduli spaces. Below, we work over the field of complex numbers.

For a finite morphism $f:X\to Y$ of degree $d$ between smooth
projective curves, the Riemann--Hurwitz formula is
\[
  2g(X)-2=d\bigl(2g(Y)-2\bigr)+\deg R_f,
\]
where $R_f$ is the ramification divisor. We may re-write this as an equality of Euler characteristics:
$$-\chi(X) = d(-\chi(Y))  + \deg R_f.$$ This formula can be generalized to the setting of smooth
quasiprojective curves by incorporating the points at infinity
into the canonical divisors. In particular, for $X$,$Y$ smooth quasiprojective one obtains the logarithmic form of the previous formula:
\[
  -\chi(X)=d\bigl(-\chi(Y)\bigr)+\deg R_f^{\mathrm{int}},
\]
where $R_f^{\mathrm{int}}$ is the ramification divisor on $X$ lying over
the open curve $Y$.  In particular,
\[
  -\chi(X)\geq d\bigl(-\chi(Y)\bigr).
\]

In  this note, we prove a higher-dimensional analogue of this inequality of Euler characteristics in the setting where the
target has nef logarithmic cotangent bundle, and more generally with coefficients in perverse sheaves.  Throughout, all
varieties are complex algebraic varieties, and $\chi$ denotes the
topological Euler characteristic of the associated analytic space. For a
bounded constructible complex $K$ (of $\bbQ$-vector spaces) on $Y$, we write
\[
  \chi(Y,K):=\sum_i(-1)^i\dim_{\bbQ} \rH^i(Y,K)
\]
where $\rH^i(Y,K)$ denotes hypercohomology
A simple
normal-crossing divisor will be abbreviated as an SNC divisor.

\begin{thm}[Logarithmic Euler--Hurwitz inequality]
\label{thm:main}
Suppose that $\oZ$ is a smooth projective variety with an SNC divisor $D$ such that $\Omega_{\oZ}^1(\log D)$ is nef.
Let $Z= \oZ\setminus D$.
Given a smooth  connected closed subvariety $Y\subset Z$ of complex
dimension $n$ 
and a finite surjective morphism of degree $d$
\[
  f:X\longrightarrow Y,
\]
 where $X$ is smooth and connected,   we have
\begin{equation}
\label{eq:main-inequality}
  (-1)^n\chi(X)\geq d(-1)^n\chi(Y).
\end{equation}

\end{thm}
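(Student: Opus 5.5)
The plan is to deduce the inequality from the positivity statement for perverse sheaves announced in the abstract: if $P$ is a perverse sheaf on $Y$ of generic rank $r$, then $\chi(Y,P)\geq r(-1)^n\chi(Y)$. We apply this to the trace-zero part of $Rf_*\bbQ_X$.

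\textbf{Splitting the direct image.} Since $f$ is finite, $f_*=Rf_*$ is t-exact for the perverse t-structure. Since $X$ is smooth of pure dimension $n$, $\bbQ_X[n]$ is perverse, hence so is $f_*\bbQ_X[n]$. The adjunction map $\bbQ_Y\to f_*\bbQ_X$ is split by the trace map $\tr\colon f_*\bbQ_X\to\bbQ_Y$, which exists for finite surjective morphisms with $Y$ normal (here $Y$ is smooth) and satisfies $\tr\circ f^*=d$. Rationally, therefore,
\[
f_*\bbQ_X[n]\cong \bbQ_Y[n]\oplus P,\qquad P:=\ker(\tr)[n].
\]
As a direct summand of a perverse sheaf, $P$ is perverse. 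On the dense open set over which $f$ is étale, $f_*\bbQ_X$ is a local system of rank $d$, so $P[-n]$ has generic rank $d-1$ there.

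\textbf{Euler characteristic bookkeeping.} We have $\chi(Y,P)=(-1)^n\chi(Y,\ker\tr)$, and
\[
\chi(X)=\chi(Y,f_*\bbQ_X)=\chi(Y)+\chi(Y,\ker\tr)=\chi(Y)+(-1)^n\chi(Y,P).
\]
Applying the perverse-sheaf inequality with $r=d-1$ gives
\[
(-1)^n\chi(X)=(-1)^n\chi(Y)+\chi(Y,P)\geq (-1)^n\chi(Y)+(d-1)(-1)^n\chi(Y)=d(-1)^n\chi(Y).
\]
This is \eqref{eq:main-inequality}. The normalization of ``generic rank'' must match the shift convention (rank of $\mathcal H^{-n}P$ on a dense open set), and I will check this against the statement of the perverse result.

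\textbf{Main obstacle: the perverse positivity itself.} If that result is not yet available, I would prove it as follows. Use Kashiwara's index formula,
\[
\chi(Y,P)=[\CC(P)]\cdot[T^*_YY],
\]
with $\CC(P)=\sum_i m_i[T^*_{S_i}Y]$ and $m_i\geq0$ for perverse $P$. The zero-section component has multiplicity $r$ and contributes $r\,(-1)^n c_n(\Omega^1_Y)\ldots$, giving $r(-1)^n\chi(Y)$ after a sign check; more precisely, its contribution is $r\,[T^*_YY]^2=r(-1)^n\chi(Y)$. It then remains to show that each other component satisfies $[T^*_{S}Y]\cdot[T^*_YY]\geq0$.

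For this I would pass to a good compactification $\overline Y\subset\oZ$ and use the surjection $\Omega^1_{\oZ}(\log D)|_{\overline Y}\to\Omega^1_{\overline Y}(\log\cdot)$, which makes the restricted log cotangent bundle nef. Each intersection number can then be written as a degree of Chern or Segre classes of nef bundles on a log compactification of the conormal variety. These are nonnegative by Fulton--Lazarsfeld positivity, after compactifying $T^*_S Y$ inside the log cotangent bundle and verifying that no contributions escape to infinity. That last verification is the delicate point: it is where the SNC and log structure are used to ensure that the conormal cycles close up inside $\Omega^1_{\oZ}(\log D)$.
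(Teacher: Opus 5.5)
Your argument is the paper's proof. The paper uses the trace splitting $Rf_*\bbQ_X[n]\simeq\bbQ_Y[n]\oplus P_f$ with $P_f$ of generic rank $d-1$, the identity $\chi(Y,P_f)=(-1)^n\bigl(\chi(X)-\chi(Y)\bigr)$, and Proposition~\ref{prop:rank-positivity} with $r=d-1$; the only difference is that it builds $\tr$ by Verdier-dualizing the unit rather than by summing over sheets, which is immaterial. Your fallback sketch of that proposition is not needed, and as written it would not go through: Kashiwara's index formula on $T^*Y$ only holds for compact $Y$, and the closure of $Y$ in $\oZ$ need not be smooth, while even on a log resolution $\Omega^1_{\oZ}(\log D)$ need not surject onto the log cotangent bundle, so nefness does not transfer to $Y$. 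The paper never compactifies $Y$: it closes up $\CC(i_*P)$ in $T^*_{\oZ}(\log D)$, applies the Wu--Zhou log index formula and Fulton--Lazarsfeld positivity on $\oZ$ itself, and gets $\langle\overline{T^*_YZ},\oZ\rangle=(-1)^n\chi(Y)$ by applying the same formula to $\bbQ_Y[n]$.
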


With the above notation, we define the
\emph{Euler ramification defect} of $f$ by
\[
  \Ram(f)
  :=(-1)^n\chi(X)-d(-1)^n\chi(Y).
\]
Thus Theorem~\ref{thm:main} says that $\Ram(f)\geq0$.  When $n=1$,
$\Ram(f)$ is exactly the degree of the ramification divisor lying over
the open target.


\begin{rem}
    The proof gives slightly more, namely that there are 
integer multiplicities $m_\alpha\ge 0 $ and  conical subvarieties
$\overline\Lambda_\alpha\subset T^*_{\overline Z}(\log D)$  such that $\Ram(f)$ is given by the intersection number
\begin{equation}
\label{eq:main-formula}
  \Ram(f)
  =
  \sum_\alpha m_\alpha
  \bigl\langle \overline\Lambda_\alpha,\overline Z\bigr\rangle,
\end{equation}
and every term on the right-hand side is nonnegative.
\end{rem}

Two immediate consequences are:

\begin{cor}[Degree bound]
\label{cor:degree-bound}
Under the hypotheses of Theorem~\ref{thm:main}, assume that
$(-1)^n\chi(Y)>0$.  Then
\[
  d\leq
  \frac{(-1)^n\chi(X)}{(-1)^n\chi(Y)}.
\]
\end{cor}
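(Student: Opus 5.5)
This corollary is a direct rearrangement of the inequality \eqref{eq:main-inequality} of Theorem~\ref{thm:main}, so the plan is simply to invoke that theorem and divide.

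First, apply Theorem~\ref{thm:main} to the given finite surjective morphism $f\colon X\to Y$ of degree $d$. The hypotheses are identical: $\oZ$ is smooth projective with an SNC divisor $D$, $\Omega^1_{\oZ}(\log D)$ is nef, $Y\subset Z$ is smooth, connected and closed of dimension $n$, and $X$ is smooth and connected. This gives
\[
  (-1)^n\chi(X)\geq d\,(-1)^n\chi(Y),
\]
which says exactly that $\Ram(f)\ge 0$.

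Second, use the extra assumption $(-1)^n\chi(Y)>0$. Dividing both sides by this positive integer preserves the direction of the inequality, and we get
\[
  \frac{(-1)^n\chi(X)}{(-1)^n\chi(Y)}\geq d,
\]
which is the asserted bound. As a side remark, since $d\ge 1$, the same inequality also shows that $(-1)^n\chi(X)\ge(-1)^n\chi(Y)>0$, so the right-hand side is a positive rational number.

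There is no real obstacle here. All the substance lies in Theorem~\ref{thm:main}, which we may assume. The only point to check is the sign of the divisor $(-1)^n\chi(Y)$, and that is supplied by hypothesis.
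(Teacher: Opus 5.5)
Your proof is correct and matches the paper's own argument: apply the inequality of Theorem~\ref{thm:main} and divide by the positive number $(-1)^n\chi(Y)$. Your side remark that the right-hand side is then positive is accurate but not needed.
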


\begin{cor}[Euler-characteristic rigidity]
\label{cor:rigidity}
Under the hypotheses of Theorem~\ref{thm:main}, assume that
\[
  (-1)^n\chi(Y)>0
  \qquad\text{and}\qquad
  \chi(X)=\chi(Y).
\]
Then $d=1$, and $f$ is an isomorphism.  In particular, the conclusion
holds if $f$ is a homotopy equivalence.
\end{cor}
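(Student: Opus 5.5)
The first assertion follows directly from Theorem~\ref{thm:main}, and the real work is the rigidity claim. Assume $(-1)^n\chi(Y)>0$ and $\chi(X)=\chi(Y)$. Inequality \eqref{eq:main-inequality} gives
\[
(-1)^n\chi(Y)=(-1)^n\chi(X)\ge d\,(-1)^n\chi(Y).
\]
Dividing by the positive number $(-1)^n\chi(Y)$ yields $d\le 1$. Since $f$ is finite surjective between irreducible varieties, $d\ge1$, so $d=1$. This is just Corollary~\ref{cor:degree-bound} specialized to $\chi(X)=\chi(Y)$.

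Next we show a finite surjective degree-one morphism $f\colon X\to Y$ with $X,Y$ smooth and connected is an isomorphism. Degree one means $f$ is birational. A finite birational morphism onto a normal variety is an isomorphism: $f_*\cO_X$ is a coherent $\cO_Y$-algebra, finite over $\cO_Y$ and contained in the function field $\bbC(X)=\bbC(Y)$. Hence it is integral over $\cO_Y$, and it equals $\cO_Y$ because $Y$ is smooth and hence normal. Since $f$ is affine, this gives $X\cong\operatorname{Spec}_Y f_*\cO_Y = Y$. This is Zariski's Main Theorem in its simplest form, and only normality of $Y$ is used.

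For the homotopy-equivalence clause, a homotopy equivalence induces isomorphisms on rational cohomology. Both $X$ and $Y$ have finite-dimensional cohomology, being complex algebraic varieties, so $\chi(X)=\chi(Y)$ and the previous case applies.

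The only genuinely nontrivial input is Theorem~\ref{thm:main}. The remaining steps are routine, the one point needing care being that $d\ge1$ and birationality force an isomorphism, which uses finiteness of $f$ and normality of $Y$.
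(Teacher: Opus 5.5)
Your proof is correct and follows the paper's argument: the inequality of Theorem~\ref{thm:main} with $\chi(X)=\chi(Y)$ forces $d=1$, a finite birational morphism onto the normal variety $Y$ is an isomorphism, and a homotopy equivalence gives $\chi(X)=\chi(Y)$. One typo: the last identification should read $X\cong\operatorname{Spec}_Y f_*\cO_X=\operatorname{Spec}_Y\cO_Y=Y$.
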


There are several interesting examples to which these results can be applied. We discuss a few  of them here.
A semiabelian variety is an extension of an abelian variety by an algebraic torus.
Given a smooth variety $Y$,
there exists a semiabelian variety $Alb(Y)$ and morphism $\alpha:Y\to Alb(Y)$,
and moreover $\alpha$ is universal among all such morphisms \cite{Fujino}.

\begin{cor}\label{cor:Alb}
    If $\alpha:Y\to Alb(Y)$ is a closed embedding, then \eqref{eq:main-inequality} holds for any degree $d$ finite morphism $f:X\to Y$ with $X$ smooth and connected.

\end{cor}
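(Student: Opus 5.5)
The plan is to reduce Corollary~\ref{cor:Alb} to Theorem~\ref{thm:main} with $Z=Alb(Y)$. Theorem~\ref{thm:main} needs a smooth projective $\oZ\supset Z$ with SNC boundary $D$ such that $\Omega^1_{\oZ}(\log D)$ is nef. Once such a compactification of the semiabelian variety $G:=Alb(Y)$ is in hand, the closed embedding $\alpha$ identifies $Y$ with a smooth connected closed subvariety of $G$. We may assume $Y$ is connected, since the setting of Theorem~\ref{thm:main} requires it. Then \eqref{eq:main-inequality} follows verbatim for any finite surjective $f\colon X\to Y$ of degree $d$ with $X$ smooth and connected. So the only work is to produce the compactification.

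\textbf{Construction of $\oZ$.} Write $1\to T\to G\to A\to 0$ with $T\cong\mathbb{G}_m^k$ and $A$ abelian. Choose a smooth projective toric compactification $T\subset\overline T$, for instance $(\bbP^1)^k$, with toric boundary $D_T$, which is SNC. Put $\oZ:=G\times^T\overline T$, the associated bundle over $A$ with fibre $\overline T$. It is smooth and projective over $A$, and its boundary $D=G\times^T D_T$ is SNC, because étale locally over $A$ the pair $(\oZ,D)$ is a product $(U\times\overline T,\,U\times D_T)$.

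\textbf{Nefness.} The step I expect to carry the content is showing that $\Omega^1_{\oZ}(\log D)$ is in fact \emph{trivial}. The group $G$ acts on $\oZ$ preserving $D$, so infinitesimally we get a map
\[
\mathrm{Lie}(G)\otimes\cO_{\oZ}\to T_{\oZ}(-\log D).
\]
I will check that this map is an isomorphism. On the toric fibre, $\mathrm{Lie}(T)\otimes\cO_{\overline T}\cong T_{\overline T}(-\log D_T)$ is the standard toric fact. The remaining directions come from the quotient $\mathrm{Lie}(G)/\mathrm{Lie}(T)=\mathrm{Lie}(A)$, which maps isomorphically onto the pulled-back tangent bundle of $A$. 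A map between vector bundles of equal rank that is an isomorphism on these two graded pieces is an isomorphism, via the exact sequence
\[
0\to T_{\oZ/A}(-\log D)\to T_{\oZ}(-\log D)\to \pi^*T_A\to 0.
\]
Dualizing gives $\Omega^1_{\oZ}(\log D)\cong\cO_{\oZ}^{\dim G}$, which is trivial and hence nef.

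\textbf{Conclusion.} With $\oZ$, $D$ and $Z=G$ constructed as above, the hypotheses of Theorem~\ref{thm:main} hold for $Y\subset Z$. Applying the theorem yields \eqref{eq:main-inequality}.
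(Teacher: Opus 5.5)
Your proof is correct, and its skeleton matches the paper's. The corollary is reduced to Theorem~\ref{thm:main} by producing a smooth projective SNC compactification of the semiabelian variety $Alb(Y)$ whose log cotangent bundle is nef; this is Proposition~\ref{prop:semiab}. With $\overline T=(\bbP^1)^k$, your $G\times^T\overline T$ is exactly the paper's fibre product $\bbP(\cO\oplus L_1^{-1})\times_A\cdots\times_A\bbP(\cO\oplus L_N^{-1})$.

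The difference lies in how nefness is proved. The paper writes the semiabelian variety as a fibre product of $\mathbb{C}^*$-bundles of line bundles in $\mathrm{Pic}^0(A)$ and uses the exact sequence
\[
0\to f^*\Omega^1_A\to\Omega^1_{\oZ}(\log D)\to\bigoplus_i\pi_i^*\Omega^1_{\bbP(\cO\oplus L_i^{-1})/A}(D^i_0+D^i_\infty)\to 0 .
\]
It computes each quotient to be $\cO$ via the relative canonical bundle of a $\bbP^1$-bundle, and concludes because extensions of nef bundles are nef. You instead use the infinitesimal $G$-action to write down a global map $\mathrm{Lie}(G)\otimes\cO_{\oZ}\to T_{\oZ}(-\log D)$. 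You then check that it is an isomorphism on the graded pieces of essentially the same relative sequence, using the toric triviality of $T_{\overline T}(-\log D_T)$.

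Your route gives a stronger conclusion: $\Omega^1_{\oZ}(\log D)$ is trivial, so in particular the paper's extension splits. It also works for any smooth projective toric compactification of $T$ and avoids the line-bundle and sign bookkeeping. The paper's route is more computational, but needs only the explicit description of semiabelian varieties and closure of nefness under extensions; it never uses the group action.

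One small point: you need not assume $Y$ connected, since $Y=f(X)$ with $X$ connected and $f$ surjective.
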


Along the way to the proof of Corollary~\ref{cor:Alb} we recover, with a short proof, the
theorem of Franecki--Kapranov \cite{FraneckiKapranov} that perverse sheaves on a
semiabelian variety have nonnegative Euler characteristic. The key observation (Proposition \ref{prop:semiab}) is the existence of a canonical log compactification with nef log cotangent bundle for any semiabelian variety.

Let $\sM_{g,n}$ denote the Deligne--Mumford stack of smooth
$n$-pointed curves of genus $g$.  
Given a finite-index torsion-free subgroup $\Gamma$ of the mapping
class group $\Gamma_{g,n}$, we obtain
a smooth quasiprojective variety $\mathcal{M} ={}_\Gamma \sM_{g,n}$, which can be viewed as the fine moduli space of pointed curves with suitable level structures. It is given analytically as the quotient of Teichm\"uller space $\Gamma\backslash \cT_{g,n}$.
This admits a  finite \'{e}tale morphism
\[
  q:\cM\longrightarrow\mathscr M_{g,n}
\]

\begin{cor}
\label{cor:moduliofcurves}
Assume $2g-2+n>0$. Given a cover
\[
  q:\mathcal M\longrightarrow\mathscr M_{g,n},
\]
as above,  the following statement holds.
If
\[
  f:X\longrightarrow\mathcal M
\]
is a finite surjective morphism of degree $d$, where $X$ is smooth
and connected, then
\[
  (-1)^{3g-3+n}\chi(X)
  \geq
  d\,(-1)^{3g-3+n}\chi(\mathcal M).
\]
Consequently, if
$
  \chi(X)=\chi(\mathcal M),
$ then $f$ is an isomorphism. 
\end{cor}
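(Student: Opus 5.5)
We reduce Corollary~\ref{cor:moduliofcurves} to Theorem~\ref{thm:main} and Corollary~\ref{cor:rigidity}. This requires two things: an embedding of $\cM$ as a smooth closed subvariety of some $Z=\oZ\setminus D$ with $\Omega^1_{\oZ}(\log D)$ nef, and the sign condition $(-1)^{3g-3+n}\chi(\cM)>0$.

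For the ambient space, the natural choice is $Z=\cM$ itself, with $Y=Z$. We take a smooth projective compactification $\oZ=\overline{\cM}$ with SNC boundary $D$, obtained as a level-structure cover of $\csM_{g,n}$. For suitable level (for example, full level $m\ge3$ in the sense of Looijenga, or Boggi--Pikaart), the normalization of $\csM_{g,n}$ in the function field of $\cM$ is smooth, or at least a smooth DM stack over it, with normal crossings boundary, and the map to $\csM_{g,n}$ is log étale. Then $\Omega^1_{\overline{\cM}}(\log D)$ is the pullback of $\Omega^1_{\csM_{g,n}}(\log \partial)$. This bundle is nef by the statement of \cite{Litt}, which the paper proves later. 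Since nefness is preserved under finite pullback, the hypothesis holds.

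For a general torsion-free $\Gamma$ the normalization may be singular. We handle this by passing to a further finite-index subgroup $\Gamma'\subset\Gamma$ for which the compactification is good, giving a finite étale cover $\cM'\to\cM$. Given $f\colon X\to\cM$, the fiber product $X'=X\times_{\cM}\cM'$ is smooth, being étale over $X$. Its components each map finitely onto $\cM'$. Multiplicativity of $\chi$ under finite étale covers of degree $e$ gives $\chi(X')=e\chi(X)$ and $\chi(\cM')=e\chi(\cM)$. Applying Theorem~\ref{thm:main} to each component of $X'$ and summing degrees, then dividing by $e$, yields the inequality for $f$. The theorem as stated requires $X$ connected, which is why we apply it componentwise.

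For rigidity we need $(-1)^{3g-3+n}\chi(\cM)>0$. By multiplicativity, $\chi(\cM)=[\Gamma_{g,n}:\Gamma]\,\chi^{\mathrm{orb}}(\sM_{g,n})$. Harer--Zagier gives $\chi^{\mathrm{orb}}(\sM_{g,1})=\zeta(1-2g)$, whose sign is $(-1)^g$, and $\chi^{\mathrm{orb}}(\sM_{g,n+1})=(2-2g-n)\chi^{\mathrm{orb}}(\sM_{g,n})$. Combining these, the sign is $(-1)^{3g-3+n}$ whenever $2g-2+n>0$. The base cases $g=0$ and $g=1$ are checked directly.

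Alternatively, one could apply Theorem~\ref{thm:strictpositivityMgn} with $P=\bbQ[3g-3+n]$, which has full support. Then Corollary~\ref{cor:rigidity} gives $d=1$ and $f$ an isomorphism.

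The main obstacle is producing the smooth SNC log compactification of $\cM$ compatible with $\csM_{g,n}$ so that the log cotangent bundle pulls back. The fallback is to work with DM stacks: apply Theorem~\ref{thm:main} in its perverse-sheaf form on $\csM_{g,n}$ to $q_*f_*\bbQ_X[n]$, avoiding the need for a scheme-level compactification.
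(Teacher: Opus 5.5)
Your proposal is correct and follows essentially the same route as the paper. You pass via Boggi--Pikaart to a finite \'etale level cover $\cM'\to\cM$ whose smooth projective SNC compactification is log \'etale over $\csM_{g,n}$, so nefness follows from Theorem~\ref{thm:Omeganef}. You then apply Theorem~\ref{thm:main} to $X'=X\times_{\cM}\cM'\to\cM'$, divide by $\deg(\cM'\to\cM)$, and take the sign $(-1)^{3g-3+n}\chi(\cM)>0$ from Harer--Zagier, all exactly as the paper does. Your componentwise application to a possibly disconnected $X'$ is a small bit of care the paper skips. Your alternative via Theorem~\ref{thm:strictpositivityMgn} is not an independent route, since that theorem's strictness is deduced from this same Harer--Zagier sign, and the stacky fallback is not needed.
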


Let $\Gamma\subset Sp_{2g}(\bbZ)$ be a torsion free subgroup of finite-index, we have a quasi-projective variety  
$\cA = {}_\Gamma \sA_g$ given analytically as a quotient of the Siegel upper half-space $\Gamma\backslash \bbH_g$.
This admits an \'etale morphism
$$q: \cA\to \sA_g$$

\begin{cor}\label{cor:moduliofav}
    Given $q:\cA\to \sA_g$ as above,
 the following statement holds.
 
If
\[
  f:X\longrightarrow\mathcal A
\]
is a finite surjective morphism of degree $d$, where $X$ is smooth
and connected, then
\[
  (-1)^{g(g+1)/2}\chi(X)
  \geq
  d\,(-1)^{g(g+1)/2}\chi(\mathcal A).
\]
Consequently, if
$
  \chi(X)=\chi(\mathcal A),
$ then $f$ is an isomorphism. 
\end{cor}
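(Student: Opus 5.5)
Since $\Gamma$ is torsion free of finite index in $Sp_{2g}(\bbZ)$, the plan is to deduce Corollary~\ref{cor:moduliofav} from Theorem~\ref{thm:main} (and its rigidity version, Corollary~\ref{cor:rigidity}). It suffices to exhibit a smooth projective $\oZ$ with an SNC divisor $D$ such that $\Omega^1_{\oZ}(\log D)$ is nef and $\cA$ embeds as a closed subvariety of $Z=\oZ\setminus D$. The natural choice is $Y=Z=\cA$ itself, with $\oZ$ a smooth projective toroidal compactification of $\Gamma\backslash\bbH_g$ in the sense of Ash--Mumford--Rapoport--Tai. After passing to a sufficiently fine regular admissible cone decomposition (refining $\Gamma$ is not allowed, but a finer fan is), $D=\oZ\setminus\cA$ is an SNC divisor. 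Here $n=\dim\cA=g(g+1)/2$, which is the sign appearing in the statement.

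The key step is nefness of $\Omega^1_{\oZ}(\log D)$, and this is where I expect the main work. Mumford's extension of Hirzebruch proportionality identifies $\Omega^1_{\oZ}(\log D)$ with the canonical extension of the automorphic bundle attached to the cotangent representation. Concretely, it is $\mathrm{Sym}^2\overline{\mathbb E}$, where $\overline{\mathbb E}$ is the Deligne extension of the Hodge bundle $\mathbb E$ (the dual of $\cA$'s tautological $\mathcal H^{0,1}$, or equivalently $F^1$ of the variation of Hodge structure $R^1\pi_*\bbQ$). I would argue in two steps.
\begin{enumerate}
\item Show that $\overline{\mathbb E}$ is nef. On the interior, $\mathbb E=F^1$ of a polarized VHS of weight one carries the Hodge metric, which has seminegative curvature on the dual (Griffiths). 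By Kawamata/Fujita-type results, the canonical extension of the top Hodge piece is nef once monodromy is unipotent, and it is because $\Gamma$ is torsion free of sufficiently deep level; if needed I would pass to a neat finite-index subgroup and descend nefness via finite pullback. Alternatively, $\det$ and all quotients are handled by Zucker/Kollár semipositivity of $F^{\max}$ of the Deligne extension.
\item Deduce that $\mathrm{Sym}^2$ of a nef bundle is nef, which holds in characteristic zero.
\end{enumerate}
One subtlety is that, with the wrong orientation, $\mathbb E$ versus $\mathbb E^\vee$ could appear. Since Mumford shows $\Omega^1(\log D)\cong\mathrm{Sym}^2\overline{\mathbb E}$ with $\det\overline{\mathbb E}$ the ample-on-Baily--Borel modular line bundle, the sign is the favorable one.

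Given this, Theorem~\ref{thm:main} applies with $Y=Z=\cA$ (smooth, connected, closed in itself), so any finite surjective $f\colon X\to\cA$ of degree $d$ with $X$ smooth and connected satisfies the inequality with $n=g(g+1)/2$. For the rigidity statement, Corollary~\ref{cor:rigidity} requires $(-1)^n\chi(\cA)>0$. Harder's Gauss--Bonnet theorem gives $\chi(\Gamma\backslash\bbH_g)=[Sp_{2g}(\bbZ):\Gamma]\,\chi(Sp_{2g}(\bbZ))$, and Harder computed $\chi(Sp_{2g}(\bbZ))=\prod_{k=1}^g\zeta(1-2k)$. Its sign is $(-1)^{\lceil g/2\rceil}$ from the Bernoulli numbers. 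I would check that $(-1)^{\lceil g/2\rceil}=(-1)^{g(g+1)/2}$, which is a parity check valid for all $g$ mod $4$, and that these values are nonzero. Alternatively, positivity follows from Hirzebruch proportionality: $(-1)^n\chi$ equals a nonzero multiple of the compact dual's Euler number, which is $2^g$ for the Lagrangian Grassmannian. Then $\chi(X)=\chi(\cA)$ forces $d=1$, and Corollary~\ref{cor:rigidity} makes $f$ an isomorphism.
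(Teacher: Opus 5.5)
Your proposal is correct and follows the paper's proof: a smooth toroidal compactification with SNC boundary, nefness of $\Omega^1_{\bar\cA}(\log D)$, Harder's theorem for the sign $(-1)^{g(g+1)/2}\chi(\cA)>0$, and then Theorem~\ref{thm:main} with $Y=Z=\cA$ plus Corollary~\ref{cor:rigidity}. The only real difference is that you sketch the nefness via the extended Kodaira--Spencer isomorphism $\Omega^1_{\bar\cA}(\log D)\cong\mathrm{Sym}^2\overline{\mathbb E}$ and semipositivity of the Deligne extension of the Hodge bundle, whereas the paper simply cites \cite[Lemma~3.3]{ArapuraPatel}; one small caution is that torsion-freeness alone does not force unipotent local monodromy (irregular cusps already occur for $g=1$, e.g.\ for $\Gamma_1(4)$), so your fallback to a neat $\Gamma'\subset\Gamma$, using the log-\'etale pullback $\Omega^1_{\bar\cA'}(\log D')\cong\bar p^{*}\Omega^1_{\bar\cA}(\log D)$ and descent of nefness along surjective maps, is genuinely needed rather than optional.
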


\begin{rem}[Arbitrary base field of characteristic zero]\label{rem:base-field}
The main theorem above and its corollaries hold over any
algebraically closed field $k$ of characteristic zero, with $\chi$ defined by
$\ell$-adic \'etale cohomology. This follows easily from base change and Artin comparison.
\end{rem}

\begin{rem}[Positive characteristic]\label{rem:charp}
If $k$ is an algebraically closed field of characteristic $p$, then some hypothesis beyond finiteness of the morphism is likely needed. If $Y$ is defined over a finite
field of characteristic $p$, the $p$-power Frobenius is a finite surjective
self-morphism of degree $p^n$ which is a universal homeomorphism, so it leaves
$\chi(Y)$ unchanged and \eqref{eq:main-inequality} fails whenever
$(-1)^n\chi(Y)>0$. For separable morphisms of curves the inequality (at least for curves) does
survive: by Grothendieck--Ogg--Shafarevich the logarithmic Riemann--Hurwitz
formula acquires Swan conductor terms along the boundary, and these are
nonnegative, although the equality form requires tameness. In higher dimensions
our method does not apply, since no logarithmic Dubson-Kashiwara theorem is available for
$\ell$-adic sheaves, and one should expect $\operatorname{Ram}(f)$ to acquire
wild ramification contributions along the boundary.
\end{rem}

\noindent{\bf Sketch of the proofs} The proof of the theorem has two ingredients.  The first is the logarithmic
Dubson--Kashiwara index theorem of Wu--Zhou
\cite[Theorem~1.6]{WuZhou}, together with the positivity of
intersections of conical cycles in a nef vector bundle.  The first and the third authors used these results to show that every perverse sheaf $P$ on $Y$ has
nonnegative Euler characteristic under the hypotheses above
\cite[Lemma~2.2]{ArapuraPatel}.  Keeping track of the zero-section
component gives the sharper estimate
\[
  \chi(Y,P)\geq r(-1)^n\chi(Y),
\]
where $r$ is the generic rank of $P$.  The second ingredient is the
trace splitting
\[
  Rf_*\bbQ_X[n]\simeq \bbQ_Y[n]\oplus P_f.
\]
The trace-zero summand $P_f$ is perverse and has generic rank $d-1$.
The equality
\[
  \chi(Y,P_f)=(-1)^n\bigl(\chi(X)-\chi(Y)\bigr)
\]
then yields Theorem~\ref{thm:main}. The applications are proved by showing that the relevant spaces have appropriate compactifications with nef log cotangent bundles.\\

\noindent{\bf Organization:} In Section~\ref{sec:cc} we recall the log Dubson-Kashiwara formula and apply it to establish the sharper estimate
\[
  \chi(Y,P)\geq r(-1)^n\chi(Y),
\]
where $r$ is the generic rank of $P$
(Proposition~\ref{prop:rank-positivity}). In Section~\ref{sec:tracezero} we establish
the trace decomposition and the Euler-characteristic identity. These results are then used in Section~\ref{sec:proof-main} to deduce the main Theorem~\ref{thm:main} and
Corollaries~\ref{cor:degree-bound} and \ref{cor:rigidity}.
In Section~\ref{sec:semiabelian}, we prove the existence of a canonical compactification with nef log cotangent bundle for any semiabelian variety. We use this to deduce the result of Franecki-Kapranov (\cite{FraneckiKapranov}) for perverse sheaves on semiabelian varieties (\ref{cor:FK}) and
Corollary~\ref{cor:Alb}. In Section~\ref{sec:modulinefness} we prove nefness for log cotangent bundles of the moduli stack of stable curves. This is used in Section~\ref{sec:appstomoduli} to deduce
Corollaries~\ref{cor:moduliofcurves} and \ref{cor:moduliofav}.

\section{A lower bound for Euler characteristics of perverse sheaves}
\label{sec:cc}
Let $\oZ$ be a smooth projective variety of dimension $m$, $D \subset \oZ$ an SNC divisor, and set $Z := \oZ \setminus D$. Let $E:=\Omega^1_{\overline Z}(\log D)$ and $T^*_{\overline Z}(\log D)$ denote the associated logarithmic cotangent bundle with zero section $  s:\overline Z\longrightarrow E$. If $C\subset E$ is an $m$-dimensional conical
cycle, define
\[
  \langle C,\overline Z\rangle
  :=[C] \cdot [s(\oZ)].
\]
This is the intersection number of $C$ with the zero section.

We recall some basic properties of the characteristic cycle of constructible sheaves and the (logarithmic) Dubson-Kashiwara formula. Let $\rD_c^b(Z,\bbQ)$ denote the bounded derived category of constructible sheaves with rational coefficients. For $K\in \rD_c^b(Z,\bbQ)$, let $\CC(K)$ denote its characteristic cycle
in the group of codimension $m$-cycles $\cZ^m(T^*Z)$.  We use the following standard facts; see
\cite[Chapter~IX]{KashiwaraSchapira} and
\cite[Sections~4.1--4.2]{Dimca}.

\begin{enumerate}
\item Every irreducible component of $\CC(K)$ is a conical
Lagrangian subvariety of $T^*Z$ of dimension $m$.

\item If $P$ is perverse, then $\CC(P)$ is an effective cycle.

\item Let $i : Y \hookrightarrow Z$ be a smooth closed subvariety of dimension $n$. Suppose that  $P$ is the minimal perverse extension of $L[n]$, where
$L$ is a local system of rank $r$ on
a dense open subset $U\subset Y$. Then the coefficient 
of the conormal bundle $T^*_Y Z\subset T^*Z$ in $\CC(i_*P)$ is $r$.
\end{enumerate}

Thus a perverse sheaf $P$  on $Y$ of generic rank $r$ has a characteristic
cycle of the form
\begin{equation}
\label{eq:CC-decomposition}
  \CC(i_*P)
  =r[T^*_Y Z]+\sum_\alpha m_\alpha[\Lambda_\alpha],
  \qquad m_\alpha\in\mathbb Z_{>0},
\end{equation}
where the projection of every $\Lambda_\alpha$ is contained in a
proper closed subvariety of $Y$.

Let $\overline{\CC(P)}$ denote the closure of the characteristic cycle
inside $E$.  The logarithmic
Dubson--Kashiwara formula gives
\begin{equation}
\label{eq:log-DK}
  \chi(Z,P)
  =\bigl\langle \overline{\CC(P)},\overline Z\bigr\rangle.
\end{equation}
The form \eqref{eq:log-DK} is established by Wu--Zhou
\cite[Theorem~1.6]{WuZhou}; see also the formulation in
\cite[Section~2.2]{ArapuraPatel}.

We suppose now that $E$ is nef. In this setting, the Fulton--Lazarsfeld positivity theorem
\cite[Theorem~8.2.6]{Lazarsfeld} implies that
\begin{equation}
\label{eq:FL}
  \langle C,\overline Z\rangle\geq0
\end{equation}
for every $m$-dimensional conical subvariety $C\subset E$.  

\begin{lem}
\label{lem:zero-section}
Let $Y,Z,\oZ,D, E$ be as above, and assume that $E$ is nef.  Then
\begin{equation}
\label{eq:zero-section}
  \bigl\langle \overline{T^*_YZ} ,\overline Z\bigr\rangle
  =(-1)^n\chi(Y)\ge 0.
\end{equation}

\end{lem}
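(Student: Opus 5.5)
The plan is to apply the logarithmic Dubson--Kashiwara formula \eqref{eq:log-DK} to the simplest perverse sheaf on $Y$, namely $P=\bbQ_Y[n]$. Nonnegativity will then come from the Fulton--Lazarsfeld inequality \eqref{eq:FL}.

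First, compute the characteristic cycle. Since $Y$ is smooth and closed in $Z$, the sheaf $i_*\bbQ_Y[n]$ is perverse. Its characteristic cycle is exactly $[T^*_YZ]$. This follows from fact (3) with $r=1$ and $U=Y$, because the minimal extension of $\bbQ_U[n]$ to the smooth variety $Y$ is $\bbQ_Y[n]$ itself. It also follows from the standard computation that a locally constant sheaf on a smooth closed submanifold has characteristic cycle equal to its rank times the conormal bundle. In the decomposition \eqref{eq:CC-decomposition} there are no further components $\Lambda_\alpha$; for this I would cite \cite[Chapter~IX]{KashiwaraSchapira}.

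Second, apply \eqref{eq:log-DK} to get
\[
\chi(Z,i_*\bbQ_Y[n])=\bigl\langle \overline{T^*_YZ},\overline Z\bigr\rangle .
\]
The left-hand side is $\chi(Y,\bbQ_Y[n])=(-1)^n\chi(Y)$: the pushforward along a closed embedding does not change hypercohomology, and the shift by $n$ multiplies the Euler characteristic by $(-1)^n$. This proves the equality in \eqref{eq:zero-section}.

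Third, for the inequality, note that $\overline{T^*_YZ}$ is the closure in $E$ of an irreducible conical subvariety of dimension $m$. It is therefore an $m$-dimensional conical subvariety of the nef bundle $E$, and \eqref{eq:FL} gives $\langle\overline{T^*_YZ},\overline Z\rangle\ge0$.

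The main point requiring care is that the closure is taken inside the logarithmic cotangent bundle $E$ rather than in $T^*\overline Z$. One has to check that the conventions of Wu--Zhou agree with this: $T^*Z$ is identified with $E|_Z$, and the Dubson--Kashiwara formula is stated for constructible complexes on $Z$ extended to $\overline Z$. Granting the formulation \eqref{eq:log-DK} as recalled above, with $P$ any constructible complex on $Z$, this is immediate. Conicity of the closure holds because the $\bbC^*$-action on $E$ preserves the closure of a $\bbC^*$-stable set.
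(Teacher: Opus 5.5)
Your proposal is correct and matches the paper's proof: apply the logarithmic Dubson--Kashiwara formula to $i_*\bbQ_Y[n]$, whose characteristic cycle is $[T^*_YZ]$ and whose Euler characteristic is $(-1)^n\chi(Y)$, then get nonnegativity from the Fulton--Lazarsfeld inequality \eqref{eq:FL}. You also note that fact (3) only fixes the coefficient of $T^*_YZ$, so the standard computation $\CC(\bbQ_Y[n])=[T^*_YZ]$ is needed to exclude other components; the paper simply asserts this.
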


\begin{proof}
For the  equality, apply the logarithmic index formula
\eqref{eq:log-DK} and the remarks above to the (middle extension of the) perverse sheaf $\bbQ_Y[n]$. Note that since $Y \subset Z$ is closed, the minimal extension to $Z$ is just the usual push-forward. Its characteristic
cycle is $[T^*_YZ]$, while
\[
  \chi(Y,\bbQ_Y[n])=(-1)^n\chi(Y).
\]
The inequality follows from \eqref{eq:FL}.
\end{proof}

The following proposition is the rank-sensitive refinement of
the  nonnegativity lemma of \cite[Lemma~2.2]{ArapuraPatel}.

\begin{prop}
\label{prop:rank-positivity}
Suppose that $E$ is nef.
Let $P$ be a perverse sheaf on $Y$, and let $r$ be its generic rank;
that is, $P|_U\simeq L[n]$ on a dense open subset $U\subset Y$, for a
local system $L$ of rank $r$.  Then
\begin{equation}
\label{eq:rank-positivity}
  \chi(Y,P)\geq r(-1)^n\chi(Y).
\end{equation}
Consequently, if $P$ has full support (i.e. $r>0$) and 
$\chi(Y)\not=0$, then 
\begin{equation}
\label{eq:strict-positivity}
\chi(Y,P)>0.
\end{equation}
\end{prop}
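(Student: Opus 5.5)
The plan is to push $P$ forward along the closed embedding $i\colon Y\hookrightarrow Z$ and use the logarithmic index formula \eqref{eq:log-DK}. Since $i$ is closed, $i_*P$ is a perverse sheaf on $Z$, and $\chi(Z,i_*P)=\chi(Y,P)$. Next, write $\CC(i_*P)$ in the form \eqref{eq:CC-decomposition}:
\[
\CC(i_*P)=r[T^*_YZ]+\sum_\alpha m_\alpha[\Lambda_\alpha],\qquad m_\alpha>0 .
\]
This decomposition needs two inputs. First, effectivity (fact (2)). Second, the multiplicity of the conormal bundle is exactly $r$. For the latter, I would apply fact (3) to the intermediate extension of $L[n]$ from $U$. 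The point to justify is that $P$ and this intermediate extension have the same generic rank, and that their characteristic cycles differ only by components lying over $Y\setminus U$. This follows from additivity of $\CC$ in distinguished triangles, together with the fact that $P$ and the intermediate extension agree over $U$. Indeed, the conormal coefficient is determined by the restriction to a dense open set, where $\CC(L[n]|_U)=r[T^*_UU]$ up to the sign convention under which perverse sheaves have effective cycles.

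Taking closures in $E=T^*_{\oZ}(\log D)$ is additive on cycles, so \eqref{eq:log-DK} gives
\[
\chi(Y,P)=r\bigl\langle\overline{T^*_YZ},\oZ\bigr\rangle+\sum_\alpha m_\alpha\bigl\langle\overline{\Lambda}_\alpha,\oZ\bigr\rangle .
\]
By Lemma~\ref{lem:zero-section}, the first term equals $r(-1)^n\chi(Y)$. Each closure $\overline\Lambda_\alpha$ is an irreducible conical subvariety of $E$ of dimension $m$, because conicity is preserved under closure. Hence \eqref{eq:FL} gives $\langle\overline\Lambda_\alpha,\oZ\rangle\ge 0$, and with $m_\alpha>0$ this yields \eqref{eq:rank-positivity}.

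For the strict inequality \eqref{eq:strict-positivity}, Lemma~\ref{lem:zero-section} gives $(-1)^n\chi(Y)\ge 0$. If $\chi(Y)\ne 0$, then $(-1)^n\chi(Y)>0$, and since $r\ge1$ we get $\chi(Y,P)\ge r(-1)^n\chi(Y)>0$.

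The main obstacle is the bookkeeping of the conormal multiplicity for a general perverse $P$ rather than an intermediate extension. I would handle it as in the first paragraph: restrict to $U$ and use that characteristic cycles are local on $Z$. Equivalently, use a Jordan–Hölder filtration of $P$. The composition factors with full support are intermediate extensions whose ranks sum to $r$, and every other factor contributes only to components over proper closed subsets. Additivity of $\CC$ over this filtration then gives exactly the coefficient $r$ on $[T^*_YZ]$, with all remaining terms effective.
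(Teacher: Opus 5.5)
Your proposal is correct and follows essentially the same route as the paper. You push forward to $Z$, write $\CC(i_*P)=r[T^*_YZ]+\sum_\alpha m_\alpha[\Lambda_\alpha]$, take closures in $E$, and apply the log index formula, Lemma~\ref{lem:zero-section} and Fulton--Lazarsfeld positivity to each $\overline\Lambda_\alpha$. Your extra justification of the coefficient $r$ for an arbitrary perverse $P$ fills in a step the paper passes over: it derives \eqref{eq:CC-decomposition} directly from fact (3), which is stated only for intermediate extensions. You supply this step either by locality of $\CC$ on the open set $Z\setminus(Y\setminus U)$ or via a Jordan--H\"older filtration.
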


\begin{proof}
Let $i :Y \hookrightarrow Z$ denote the closed embedding. Since $i$ is a closed immersion, $\chi(Y,P) = \chi(Z,i_*P)$. Moreover, it follows from the remarks above that 
\[
  \CC(i_*P)
  =r[T^*_YZ]
   +\sum_\alpha m_\alpha[\Lambda_\alpha].
\]
Taking closures in the logarithmic cotangent bundle in
\eqref{eq:CC-decomposition} gives
\[
  \overline{\CC(i_*P)}
  =r[\overline{T^*_YZ}]
   +\sum_\alpha m_\alpha[\overline\Lambda_\alpha].
\]
By the logarithmic index theorem, Lemma~\ref{lem:zero-section}, and
linearity of the Gysin intersection,
\begin{align*}
  \chi(Y,P)
  &=r(-1)^n\chi(Y)
    +\sum_\alpha m_\alpha
      \bigl\langle\overline\Lambda_\alpha,\overline Z\bigr\rangle.
\end{align*}
Each cycle $\overline\Lambda_\alpha$ is conical of dimension $m=\dim Z$.
Since $\Omega^1_{\overline Z}(\log D)$ is nef, every intersection in
the sum is nonnegative by \eqref{eq:FL}.  This proves
\eqref{eq:rank-positivity}.

If $P$ has full support on $Y$, then $r>0$. If $\chi(Y)\not=0$, then
$(-1)^n\chi(Y)>0$ by Lemma \ref{lem:zero-section}.
This inequality makes the first term on the right of the above equation strictly positive.
\end{proof}

\begin{rem}
For $r=0$, Proposition~\ref{prop:rank-positivity} reduces to the
nonnegativity statement of \cite[Lemma~2.2]{ArapuraPatel}.  The only
additional observation is that the conormal component $T_Y^*Z$
occurs with
multiplicity equal to the generic rank and that its contribution is
$(-1)^n\chi(Y)$.
\end{rem}

\section{The trace decomposition and the Euler-characteristic identity}\label{sec:tracezero}

We use the standard cohomological shift notation: for a complex $K$,
its shift $K[n]$ is characterized by
\[
  \mathcal H^j(K[n])=\mathcal H^{j+n}(K),
\]
where $\mathcal H^j(K)$ denotes the $j$-th cohomology sheaf of $K$.

Let $X$, $Y$, and $f:X\to Y$ be as in
Theorem~\ref{thm:main}. Since $f$ is finite and surjective,
\[
  n:=\dim X=\dim Y.
\]
Since $X$ and $Y$ are smooth of complex dimension $n$, the shifted
constant sheaves $\bbQ_X[n]$ and $\bbQ_Y[n]$ are perverse; see
\cite[\S 4]{BBD} and \cite[\S 2]{deCataldoMigliorini}. We denote by
$\Perv(Y,\bbQ)$ the abelian category of perverse sheaves of
$\bbQ$-vector spaces on $Y$.

Since $f$ is finite, the derived direct-image functor $Rf_*$ is
$t$-exact for the middle perverse $t$-structure. Consequently,
\[
Rf_*\bbQ_X[n] \in \Perv(Y,\bbQ).
\]
See
\cite[\S 4.1]{BBD}.

The following lemma is a particularly simple case of the decomposition theorem of Beilinson, Bernstein, Deligne, and Gabber \cite{BBD}. In the finite-map setting considered here, however, the required splitting can be obtained directly from the unit and trace maps.

\begin{lem}
\label{lem:trace-splitting}
There is a splitting in $\Perv(Y,\bbQ)$
\begin{equation}
\label{eq:trace-splitting}
  Rf_*\bbQ_X[n]\simeq \bbQ_Y[n]\oplus P_f,
\end{equation}
where $P_f$ is perverse. If $U\subset Y$ is a dense open subset over
which $f$ is \'{e}tale, then
\[
  P_f|_U\simeq L_0[n],
\]
where
\[
  L_0:=
  \ker\left(
    (f_*\bbQ_X)|_U\longrightarrow\bbQ_U
  \right)
\]
is the trace-zero local system of rank $d-1$. In particular, if
$d>1$, then $P_f$ has full support and generic rank $d-1$.
\end{lem}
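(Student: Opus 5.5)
We will construct the splitting directly from the adjunction unit and a trace map, without invoking the decomposition theorem. The unit of adjunction gives a morphism $\eta\colon \bbQ_Y\to f_*\bbQ_X = Rf_*\bbQ_X$; the latter equality holds because $f$ is finite. Next we construct a trace morphism $\tr\colon f_*\bbQ_X\to \bbQ_Y$ with $\tr\circ\eta = d\cdot\id$. Dividing by $d$ (we are over $\bbQ$) gives an idempotent endomorphism $e=\frac1d\eta\circ\tr$ of $f_*\bbQ_X$. After shifting by $[n]$, $e$ is an idempotent in the abelian category $\Perv(Y,\bbQ)$, and $f_*\bbQ_X[n]$ lies there by $t$-exactness of $Rf_*$. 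An idempotent in an abelian category splits, so
\[
f_*\bbQ_X[n]\simeq \operatorname{im}(e)\oplus\ker(e),\qquad \operatorname{im}(e)\simeq\bbQ_Y[n].
\]
We set $P_f:=\ker(e)=\ker(\tr[n])$. This is perverse, being a direct summand of a perverse sheaf.

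The main obstacle is constructing $\tr$ on all of $Y$, including over the branch locus, and not merely over the étale locus $U$. We plan to use duality. Since $X$ and $Y$ are smooth, $\omega_X=\bbQ_X[2n]$ and $\omega_Y=\bbQ_Y[2n]$. For finite $f$ we have $f_!=f_*$, so the counit $f_!f^!\omega_Y\to\omega_Y$ reads
\[
f_*\bbQ_X[2n]\to\bbQ_Y[2n],
\]
and shifting gives $\tr$. It remains to check $\tr\circ\eta=d$. Both sides are endomorphisms of $\bbQ_Y$, and $\operatorname{Hom}(\bbQ_Y,\bbQ_Y)=\bbQ$ because $Y$ is connected. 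It therefore suffices to check the identity on the dense open $U$. There $f$ is a $d$-sheeted covering map and the trace is the usual sum over sheets, so $\tr\circ\eta=d$ on $U$. As an alternative, one could define $\tr$ on stalks by summing over preimages with local degrees $\sum_{x\mapsto y}\deg_x f=d$. Showing that this is a sheaf map still needs normality of $Y$ (here smoothness), so the duality route is cleaner.

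For the generic statement, restrict to $U$. There $f_*\bbQ_X|_U$ is a local system of rank $d$, and the splitting restricts to $(f_*\bbQ_X)|_U=\bbQ_U\oplus L_0$ with $L_0=\ker(\tr|_U)$. Hence $P_f|_U\simeq L_0[n]$, and $L_0$ has rank $d-1$. When $d>1$ this rank is positive on the dense open $U$, so $P_f$ has generic rank $d-1$ and full support.
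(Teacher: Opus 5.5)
Your proposal is correct and follows essentially the same route as the paper. Both use the unit $\eta$ and a trace obtained by duality (your counit $f_!f^!\omega_Y\to\omega_Y$ is exactly the Verdier dual of $\eta$ that the paper uses), check $\tr\circ\eta=d$ on the \'{e}tale locus, and take $P_f=\ker(\tr)$ as the complementary summand in $\Perv(Y,\bbQ)$; your observation that $\operatorname{Hom}(\bbQ_Y,\bbQ_Y)=\bbQ$ for connected $Y$, which justifies checking the identity on $U$, makes explicit a step the paper leaves implicit.
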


\begin{proof}
There is a natural map
\[
  \eta:\bbQ_Y[n]\longrightarrow Rf_*\bbQ_X[n],
\]
induced by pulling locally constant functions on $Y$ back to $X$.
This is the unit map associated with the adjunction between $f^*$ and
$Rf_*$. Over the locus where $f$ is \'{e}tale, it is fiberwise the
diagonal map
\[
  a\longmapsto(a,\ldots,a).
\]

Since $f$ is finite, it is proper and Verdier duality commutes with
proper push-forward. Moreover, since $X$ and $Y$ are smooth of complex dimension
$n$, the perverse sheaves $\bbQ_X[n]$ and $\bbQ_Y[n]$ are Verdier
self-dual. Dualizing $\eta$ therefore gives a trace morphism
\[
  \tr:Rf_*\bbQ_X[n]\longrightarrow\bbQ_Y[n].
\]
See \cite{BBD} for Verdier duality and its compatibility with proper
direct image.

The unit and trace satisfy the standard identity
\[
  \tr\circ\eta=d\,\id_{\bbQ_Y[n]}.
\]
Over the dense \'{e}tale locus $U$, this is the elementary
calculation in which the diagonal map
\[
  a\longmapsto(a,\ldots,a)
\]
is followed by summation over the $d$ sheets.

Since we work with $\bbQ$-coefficients, the integer $d$ is invertible.
It follows that
\[
  (d^{-1}\tr)\circ\eta=\id_{\bbQ_Y[n]}.
\]
Thus $d^{-1}\tr$ is a retraction of $\eta$. Setting
\[
  P_f:=\ker(\tr)
\]
in the abelian category $\Perv(Y,\bbQ)$, we obtain the splitting
\eqref{eq:trace-splitting}.

Over $U$, the sheaf $(f_*\bbQ_X)|_U$ is a local system of rank $d$,
and $L_0$ is the kernel of summation onto the constant rank-one local
system. Hence $L_0$ has rank $d-1$, and
\[
  P_f|_U\simeq L_0[n].
\]
If $d>1$, then $P_f|_U$ is nonzero. Consequently, the support of
$P_f$ is a closed subset containing the dense open subset $U$, and
therefore
\[
  \Supp(P_f)=Y.
\]
Thus $P_f$ has full support and generic rank $d-1$.
\end{proof}

\begin{lem}
\label{lem:euler-Pf}
With the notation above,
\begin{equation}
\label{eq:euler-Pf}
  \chi(Y,P_f)
  =
  (-1)^n\bigl(\chi(X)-\chi(Y)\bigr).
\end{equation}
\end{lem}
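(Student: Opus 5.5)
The identity follows from the additivity of $\chi(Y,-)$ over the splitting of Lemma~\ref{lem:trace-splitting}, together with the invariance of hypercohomology under finite push-forward. The plan is to compute $\chi(Y,Rf_*\bbQ_X[n])$ in two ways and compare.

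First, since hypercohomology commutes with derived direct image, we have $\rH^i(Y,Rf_*\bbQ_X[n])=\rH^i(X,\bbQ_X[n])=\rH^{i+n}(X,\bbQ)$ for every $i$. Summing with signs gives
\[
  \chi(Y,Rf_*\bbQ_X[n])=\sum_i(-1)^i\dim\rH^{i+n}(X,\bbQ)=(-1)^n\chi(X).
\]
Here $\chi(X)$ is the topological Euler characteristic, which equals the alternating sum of the Betti numbers of the rational cohomology; these are finite because $X$ is a complex algebraic variety. The same shift computation applied to $\bbQ_Y[n]$ gives $\chi(Y,\bbQ_Y[n])=(-1)^n\chi(Y)$.

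Second, hypercohomology takes direct sums in the derived category to direct sums of graded vector spaces. Hence the splitting \eqref{eq:trace-splitting} yields
\[
  \chi(Y,Rf_*\bbQ_X[n])=\chi(Y,\bbQ_Y[n])+\chi(Y,P_f).
\]
Note that $P_f$ is constructible, being a direct summand of a constructible complex, so every term here is finite. Combining the two displays gives $\chi(Y,P_f)=(-1)^n\chi(X)-(-1)^n\chi(Y)$, which is \eqref{eq:euler-Pf}.

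No step is a genuine obstacle. The only points requiring care are the sign bookkeeping for the shift by $n$ and the finiteness of the cohomology of the constructible complexes involved, both of which are standard.
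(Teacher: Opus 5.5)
Your proposal is correct and follows the same route as the paper: you compute $\chi(Y,Rf_*\bbQ_X[n])=(-1)^n\chi(X)$ using $R\Gamma(Y,Rf_*-)\simeq R\Gamma(X,-)$, and then take Euler characteristics across the trace splitting. Your remarks on the shift-by-$n$ sign and on finiteness of cohomology are correct additions that the paper leaves implicit.
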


\begin{proof}
Since taking derived global sections after derived direct image is
the same as taking derived global sections on the source,
\[
  R\Gamma\bigl(Y,Rf_*\bbQ_X[n]\bigr)
  \simeq
  R\Gamma(X,\bbQ_X[n]).
\]
Therefore,
\[
  \chi(Y,Rf_*\bbQ_X[n])
  =
  \chi(X,\bbQ_X[n])
  =
  (-1)^n\chi(X).
\]
On the other hand,
\[
  \chi(Y,\bbQ_Y[n])=(-1)^n\chi(Y).
\]
Taking Euler characteristics in the direct-sum decomposition
\eqref{eq:trace-splitting} gives
\[
  (-1)^n\chi(X)
  =
  (-1)^n\chi(Y)+\chi(Y,P_f),
\]
which proves \eqref{eq:euler-Pf}.
\end{proof}
\section{Proof of Theorem \ref{thm:main}: the Logarithmic Euler--Hurwitz inequality}
\label{sec:proof-main}

\begin{proof}[Proof of Theorem~\ref{thm:main}]
We apply Proposition~\ref{prop:rank-positivity} to the perverse sheaf
$P_f$ from Lemma~\ref{lem:trace-splitting}.  Its generic rank is
$d-1$, so
\[
  \chi(Y,P_f)
  \geq(d-1)(-1)^n\chi(Y).
\]
Using Lemma~\ref{lem:euler-Pf}, this becomes
\[
  (-1)^n\bigl(\chi(X)-\chi(Y)\bigr)
  \geq(d-1)(-1)^n\chi(Y).
\]
Rearranging gives
\[
  (-1)^n\chi(X)\geq d(-1)^n\chi(Y).
\]

For the more precise formula in terms of characteristic cycles, let $i:Y\hookrightarrow Z$ denote the inclusion and write
\[
  \CC(i_*P_f)
  =(d-1)[T^*_Y Z]
   +\sum_\alpha m_\alpha[\Lambda_\alpha].
\]
The proof of Proposition~\ref{prop:rank-positivity} gives
\[
  \chi(Y,P_f)
  =(d-1)(-1)^n\chi(Y)
   +\sum_\alpha m_\alpha
     \bigl\langle\overline\Lambda_\alpha,\overline Z\bigr\rangle.
\]
Substituting \eqref{eq:euler-Pf} and rearranging yields
\eqref{eq:main-formula}.  Every summand is nonnegative by \eqref{eq:FL}.
\end{proof}

\begin{proof}[Proof of Corollary~\ref{cor:degree-bound}]
Divide \eqref{eq:main-inequality} by the positive number
$(-1)^n\chi(Y)$.
\end{proof}

\begin{proof}[Proof of Corollary~\ref{cor:rigidity}]
Set $a=(-1)^n\chi(Y)>0$.  Since $\chi(X)=\chi(Y)$,
Theorem~\ref{thm:main} gives
\[
  a\geq da.
\]
Thus $d\leq1$.  Since $f$ is surjective, $d\geq1$, so $d=1$.  A
finite degree-one morphism is birational.  The target $Y$
is smooth, hence normal, and therefore a finite birational morphism to
$Y$ is an isomorphism.  If $f$ is a homotopy equivalence, then
$\chi(X)=\chi(Y)$.
\end{proof}

\begin{rem}[Equality]
If $f$ is finite \'{e}tale, then $P_f$ is a shifted local system on all of
$Y$, so its characteristic cycle has no components other than the
zero section.  Consequently, $\Ram(f)=0$, in agreement with the usual
multiplicativity $\chi(X)=d\chi(Y)$.  Under mere nefness, the converse
need not follow from the argument: a nonzero conical component can in
principle have zero intersection with the zero section.  An equality
criterion would require additional positivity or geometric input.
\end{rem}

\section{Proof of Corollary \ref{cor:Alb}: Semiabelian varieties}\label{sec:semiabelian}
In this section, we give a proof of Corollary \ref{cor:Alb}. Along the way we obtain a simple proof of a result of Franecki-Kapranov on the positivity of Euler characteristics of perverse sheaves on semiabelian varieties as an application of the log Dubson-Kashiwara formula. The key result is the following proposition which gives a canonical compactification of a semiabelian variety with nef log cotangent bundle.

\begin{prop}\label{prop:semiab}
    A semiabelian variety $Z$ admits a smooth projective compactification $\oZ$ such 
    that $D= \oZ\setminus Z$ is an SNC divisor and $\Omega_{\oZ}^1(\log D)$ is nef. 
\end{prop}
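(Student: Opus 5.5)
We construct $\oZ$ as the torus-embedding bundle over the abelian quotient. Write the semiabelian variety as an extension
\[
1\to T\to Z\xrightarrow{\pi} A\to 0,
\]
with $T\cong\mathbb G_m^k$ and $A$ an abelian variety. Using the isomorphism $T\cong\mathbb G_m^k$, regard $Z$ as a principal $T$-bundle over $A$. Concretely, $Z$ is the complement of the zero sections in a direct sum of $k$ line bundles $L_1,\dots,L_k\in\mathrm{Pic}^0(A)$, i.e.\ $Z=\prod_A (L_i\setminus 0)$ (fibre product over $A$). Set
\[
\oZ:=Z\times^T(\bbP^1)^k=\bbP(\cO\oplus L_1)\times_A\cdots\times_A\bbP(\cO\oplus L_k),
\]
which is a $(\bbP^1)^k$-bundle over $A$. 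This $\oZ$ is smooth and projective. The boundary $D$ is the union of the $2k$ sections $\{0\}$ and $\{\infty\}$ in each factor, and these form an SNC divisor because in local trivializations $D$ is the toric boundary of $(\bbP^1)^k\times(\text{open in }A)$.

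The key step is to show that $\Omega^1_{\oZ}(\log D)$ is actually trivial, hence nef. First, $T$ acts on $\oZ$ preserving $D$, and we have the relative log sequence
\[
0\to\pi^*\Omega^1_A\to\Omega^1_{\oZ}(\log D)\to\Omega^1_{\oZ/A}(\log D)\to0 .
\]
Here $\pi^*\Omega^1_A\cong\cO^{\dim A}$. The relative log cotangent bundle of a toric fibration is trivialized by the $T$-invariant forms $dz_i/z_i$. These forms are globally well defined because the transition functions are multiplications by $T$-valued functions, so $dz_i/z_i$ changes by $\pi^*(dg_i/g_i)$, which is a pullback form. It follows that $\Omega^1_{\oZ/A}(\log D)\cong\cO^k$, so $\Omega^1_{\oZ}(\log D)$ is an extension of trivial bundles.

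An extension of trivial (or nef) bundles is nef, since nefness is preserved under extensions. This already suffices. If one wants triviality outright, it can be obtained as follows. The group $Z$ acts on $\oZ$ with $D$ invariant: $Z$ acts on itself by translation, and this extends to $\oZ$ because $\oZ=Z\times^T(\bbP^1)^k$ and $Z$ is commutative. The infinitesimal action then gives a map $\mathrm{Lie}(Z)\otimes\cO_{\oZ}\to T_{\oZ}(-\log D)$, which is an isomorphism on the open orbit $Z$ and also along every boundary stratum (each stratum is an orbit of a semiabelian quotient, and the log tangent space is spanned there by the action). Hence $T_{\oZ}(-\log D)$ is trivial.

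The main obstacle is justifying the global existence of the compactification and the SNC property. Specifically, one must check that the $T$-bundle structure comes from line bundles $L_i$ so that $\oZ$ is projective, which uses Chevalley's structure theorem together with the classification of extensions $\mathrm{Ext}^1(A,\mathbb G_m)\cong\mathrm{Pic}^0(A)$. One must also check that the extended $Z$-action is algebraic. Since the nefness conclusion only needs the extension sequence above, we will fall back on that sequence if verifying the action is cumbersome.
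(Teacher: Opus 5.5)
Your proposal is correct and follows essentially the same route as the paper: the same $(\bbP^1)^k$-bundle compactification $\bbP(\cO\oplus L_1)\times_A\cdots\times_A\bbP(\cO\oplus L_k)$ (the paper uses $L_i^{-1}$, which gives an isomorphic bundle), the same relative log cotangent sequence with trivial left term $\pi^*\Omega^1_A$, and nefness from closure of nef bundles under extensions. The only difference is how the rank-one relative terms are shown to be trivial: you observe that $dz_i/z_i$ is well defined modulo pullback forms, whereas the paper computes $\Omega^1_{\bbP(\cO\oplus L_i^{-1})/A}(D_0^i+D_\infty^i)\cong\cO$ from $\cO(D_0^i)=\cO(1)$, $\cO(D_\infty^i)=\cO(1)\otimes f_i^*L_i$ and the relative canonical bundle formula; both arguments are valid.
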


\begin{proof}
   We use the description of semiabelian varieties found in \cite[\S 5]{FraneckiKapranov}.
Given an abelian variety $A$ and line bundles $L_1,\ldots, L_N\in Pic^0(A)$, then
$$Z = (L_1^*)\times_A\ldots \times_A (L_N^*)$$
 with an appropriate group law is a semiabelian variety, where $L_i^*$ is the $\bbC^*$-bundle obtained by removing the zero section.
Conversely any semiabelian variety is of this form. Using this description, we can see that
$$\oZ = \bbP(\cO \oplus L_1^{-1})\times_A\ldots \times_A \bbP(\cO\oplus L_N^{-1})$$
is a smooth compactification such that $D= \oZ\setminus Z$ is an SNC divisor.

It remains to show that $\Omega_{\oZ}^1(\log D)$ is nef. 
Let $f:\oZ\to A$, $f_i:\bbP(\cO\oplus L_i^{-1})\to A$ and $\pi_i:\oZ\to \bbP(\cO\oplus L_i^{-1})$ denote the projections.
Then we have an exact sequence
$$0\to f^*\Omega_A^1\to \Omega_{\oZ}^1(\log D) \to \bigoplus_i \pi_i^*\Omega^1_{\bbP(\cO\oplus L_i^{-1})/A}(D^i_0+D^i_\infty)\to 0$$
where $D_0^i, D_\infty^i$ are the zero and infinity sections of $\bbP(\cO\oplus L_i^{-1})$ (i.e. corresponding to the summands $\cO$ and $L_i^{-1}$).
Since $\Omega_A^1$ is trivial, and the class of nef bundles is closed under extensions, it suffices to prove that each factor on the right is nef. Note that, in our setting, the relative cotangent bundle is rank 1 and therefore the usual relative canonical bundle.
We have that 
$$\cO(D_0^i) = \cO_{\bbP(\cO\oplus L_i^{-1})}(1)$$
$$\cO(D_\infty^i)= \cO_{\bbP(\cO\oplus L_i^{-1})}(1)\otimes f_i^*L_i $$
Putting these together with the standard formula for the relative canonical bundle of a projective bundle,
 we obtain
 $$ \Omega^1_{\bbP(\cO\oplus L_i^{-1})/A}(D^i_0+D^i_\infty) = \left[f_i^*{\wedge}^2(\cO\oplus L_i^{-1})\otimes \cO(-2)\right]\otimes \cO(2)\otimes f_i^*L_i = \cO_{\bbP(\cO\oplus L_i^{-1})}$$
 This is certainly nef.
 
\end{proof}

As a corollary, we obtain a simple proof  of one of  the main results of  \cite{FraneckiKapranov}.

\begin{cor}[Franecki-Kapranov]\label{cor:FK}
    If $P$ is a perverse sheaf on a semiabelian variety $Z$,
    $$\chi(Z,P)\ge 0.$$
    We have strict inequality if $P$ has full support on a smooth closed subvariety with nonzero Euler characteristic.
\end{cor}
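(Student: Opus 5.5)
The plan is to reduce Corollary~\ref{cor:FK} to Proposition~\ref{prop:rank-positivity} and the positivity \eqref{eq:FL}, using the compactification of Proposition~\ref{prop:semiab}. Fix a semiabelian variety $Z$ of dimension $m$. Let $\oZ$ and $D$ be as in Proposition~\ref{prop:semiab}, so that $E=\Omega^1_{\oZ}(\log D)$ is nef.

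For the nonnegativity, I would apply the logarithmic Dubson--Kashiwara formula \eqref{eq:log-DK} directly to $P$ on $Z$. Since $P$ is perverse, $\CC(P)$ is effective. So $\CC(P)=\sum_\beta m_\beta[\Lambda_\beta]$ with $m_\beta>0$, where each $\Lambda_\beta$ is a conical Lagrangian subvariety of $T^*Z$ of dimension $m$. The closure $\overline{\Lambda_\beta}$ in $E$ is conical (closure of a $\bbC^*$-stable set is $\bbC^*$-stable) and irreducible of dimension $m$. Hence
\[
\chi(Z,P)=\sum_\beta m_\beta\bigl\langle\overline{\Lambda_\beta},\oZ\bigr\rangle\ge 0
\]
by \eqref{eq:FL}. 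Equivalently, this is the case $Y=Z$, $r$ arbitrary of Proposition~\ref{prop:rank-positivity} combined with $(-1)^m\chi(Z)=0$, but the direct argument also covers $P$ with arbitrary support. Here $\chi(Z)=0$ holds because $Z$ is a $(\bbC^*)^k$-bundle over an abelian variety.

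For the strict inequality, suppose $P$ has full support on a smooth closed subvariety $Y\subset Z$ of dimension $n$ with $\chi(Y)\neq0$. I read this as: $P=i_*P'$ with $i\colon Y\hookrightarrow Z$ and $P'$ perverse on $Y$ of generic rank $r>0$. Since $i$ is a closed immersion, $i^*$ of a perverse sheaf supported on $Y$ is perverse, so $P'$ exists. Then $\chi(Z,P)=\chi(Y,P')$, and Proposition~\ref{prop:rank-positivity}, applied with $\oZ,D$ from Proposition~\ref{prop:semiab} and this $Y$, gives $\chi(Y,P')\ge r(-1)^n\chi(Y)$. Lemma~\ref{lem:zero-section} gives $(-1)^n\chi(Y)\ge0$, and the hypothesis $\chi(Y)\neq0$ makes it strictly positive. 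Together with $r>0$ this yields $\chi(Z,P)>0$.

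The only potentially delicate point is the bookkeeping in the nonnegativity step. One must make sure that the closure of each Lagrangian component is a genuine $m$-dimensional conical subvariety of the log cotangent bundle, so that Fulton--Lazarsfeld applies. One must also make sure that the Wu--Zhou formula is being used for sheaves on all of $Z$, not only for those supported on a smooth $Y$. Both are already implicit in Section~\ref{sec:cc}, so I expect no real obstacle. The substantive input is Proposition~\ref{prop:semiab}.
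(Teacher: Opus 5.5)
Your proposal is correct and follows the paper's proof. Nonnegativity comes from Proposition~\ref{prop:semiab} combined with the logarithmic Dubson--Kashiwara formula and Fulton--Lazarsfeld positivity, and strictness comes from applying Proposition~\ref{prop:rank-positivity} to the perverse sheaf on $Y$ whose pushforward is $P$. One small point: Proposition~\ref{prop:rank-positivity} with $Y=Z$ already allows $r=0$, so it covers every perverse sheaf on $Z$, and your ``direct argument'' adds nothing beyond it.
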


\begin{proof}
    The first part follows immediately from Proposition \ref{prop:semiab}, the log Dubson-Kashiwara formula, and Fulton-Lazarsfeld positivity. The second part follows from \ref{prop:rank-positivity}. 
\end{proof}

\begin{proof}[Proof of Corollary~\ref{cor:Alb}]
The corollary is an immediate consequence of  Theorem \ref{thm:main} and Proposition \ref{prop:semiab}.
\end{proof}

\section{Nefness of log cotangent bundle of moduli of curves}\label{sec:modulinefness}

We remind the reader that all varieties and stacks are over $\mathbb{C}$. When $2g-2+n>0$, we let
 $\sM_{g,n}$ (resp. $\overline{\sM_{g,n}}$) denote the moduli stack of genus-$g$ smooth curves with $n$ marked points (resp. its Deligne-Mumford-Knudsen compactification \cite{Knudsen}). 
This is a smooth DM stack of dimension $3g-3+n$.
Let 
$$\Delta_{g,n}= \csM_{g,n}-\sM_{g,n}$$
denote the divisor at infinity. We simply write $\Delta$ if there is no confusion.
The following fact is stated in \cite[remark 6.15]{Litt} without proof, and the goal in this section is to supply one.

\begin{thm}\label{thm:Omeganef}
If $2g-2+n>0$, then   $\Omega^1_{\csM_{g,n}}(\log \Delta)$ is nef.
\end{thm}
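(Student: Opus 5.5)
The plan is to use deformation theory to identify the log cotangent bundle, then test nefness on curves through every point of the boundary. Recall that the log tangent bundle $T_{\csM_{g,n}}(-\log\Delta)$ at a stable pointed curve $(C,p_1,\dots,p_n)$ is $\mathrm{Ext}^1(\Omega_C(p_1+\cdots+p_n),\cO_C)$ modulo the smoothing directions. Dually, there is a standard identification
\[
\Omega^1_{\csM_{g,n}}(\log\Delta)\simeq \pi_*\bigl(\omega_{\pi}(\textstyle\sum_i \sigma_i)\otimes\Omega_\pi(\textstyle\sum_i\sigma_i)\bigr),
\]
where $\pi:\csM_{g,n+1}\to\csM_{g,n}$ is the universal curve, $\sigma_i$ are the sections and $\Omega_\pi$ is the sheaf of relative differentials. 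Fibrewise, this is the space $\rH^0(C,\omega_C(\sum p_i)\otimes\Omega_C(\sum p_i))$ of quadratic differentials that are log along the markings and allowed the balanced poles at the nodes. Because $\csM_{g,n}$ is a DM stack, nefness is tested after pulling back to a projective scheme by a finite cover. It is therefore enough to show that, for every morphism $h:B\to\csM_{g,n}$ from a smooth projective curve, every quotient line bundle $h^*\Omega^1(\log\Delta)\to Q$ has $\deg Q\ge0$.

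For this test I would pass to the dual statement. Take the pulled-back family $\pi_B:\mathcal C\to B$, after base change and semistable reduction so that $\mathcal C$ is a normal surface with at worst $A_k$ singularities. A negative-degree quotient is the same as a nonzero section of $h^*T(-\log\Delta)\otimes Q$ with $\deg Q<0$. Equivalently, it is a nonzero map $Q^{-1}\to h^*T(-\log\Delta)$ from a positive-degree line bundle.

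The cleanest way to rule this out is analytic, via the Weil--Petersson geometry. On $\sM_{g,n}$, the log tangent bundle restricted to the interior is $T\sM_{g,n}$. The Weil--Petersson metric on it has holomorphic sectional (indeed bisectional-type) curvature that is nonpositive; this is Wolpert's and Tromba's result, in the strong form of Schumacher and Liu--Sun--Yau. For a subbundle $Q^{-1}\subset h^*T$, curvature decreases on subbundles. Hence the induced metric on $Q^{-1}$ has nonpositive curvature over $h^{-1}(\sM_{g,n})$, so $Q^{-1}$ should have degree $\le0$, which is exactly what we want.

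The main obstacle is the boundary, in two respects. First, the curve $B$ may lie entirely inside $\Delta$. Second, the metric degenerates near $\Delta$, and the degree computation is only valid if the Weil--Petersson metric has at most logarithmic (Mumford-good) singularities on $T(-\log\Delta)$. For the second issue I would invoke the results of Liu--Sun--Yau and Wolpert that the Weil--Petersson metric on the log tangent bundle is good in Mumford's sense, so that Chern forms compute degrees. For curves inside a boundary stratum I would use the stratification. Along a stratum $\csM_{\Gamma}=\prod_v\csM_{g_v,n_v}/\mathrm{Aut}$, restricting $\Omega^1(\log\Delta)$ gives an extension
\[
0\to \textstyle\bigoplus_e \cO\to \Omega^1_{\csM_{g,n}}(\log\Delta)|_{\csM_\Gamma}\to \bigoplus_v \Omega^1_{\csM_{g_v,n_v}}(\log\Delta_v)\to0,
\]
with one trivial summand per node, namely the residues $d t_e/t_e$. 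Since extensions of nef bundles are nef, an induction on the number of nodes reduces the problem to curves meeting the open part of some stratum. There the analytic argument applies to each vertex factor.

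An alternative algebraic route I would try first for that step is to express $\Omega^1(\log\Delta)$ as a quotient of $\pi_*\omega_\pi(\sum\sigma_i)^{\otimes 2}$. Nefness would then follow from semipositivity of such direct images (Viehweg/Kollár), since quotients of nef bundles are nef. The risk here is that the identification is only a subsheaf near the nodes, not a quotient. If that fails, I fall back on the Weil--Petersson argument above.
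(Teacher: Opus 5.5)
Your route is genuinely different from the paper's, and its main line works once the analytic inputs are stated precisely.

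\textbf{How the two proofs differ.} The paper's proof is algebraic.
\begin{enumerate}
\item For $n=0$ it identifies $\Omega^1_{\csM_g}(\log\Delta)\cong\pi_*\omega_\pi^{\otimes 2}$ by logarithmic Kodaira--Spencer theory. The pullback to any $h\colon B\to\csM_g$ is then $f_*\omega_{X/B}^{\otimes2}$, which is nef by Fujino's semipositivity theorem for slc families.
\item It inducts on $n$ using the forgetful map $\csM_{g,n+1}\to\csM_{g,n}$. This map is log smooth, and its relative log cotangent bundle $\omega_\pi(\sum\sigma_i)$ is nef by Keel. Nefness of extensions then gives the induction step.
\item The base cases are $(0,3)$ and $(1,1)$.
\end{enumerate}
Fujino's theorem allows non-normal slc total spaces, so curves $B\subset\Delta$ need no separate treatment there.

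Your argument instead uses Griffiths (bisectional) seminegativity of the Weil--Petersson metric, which follows from Wolpert's curvature formula. This is uniform in $(g,n)$ and needs neither base cases nor induction on $n$. The cost is deep boundary analysis plus a separate stratification step for curves in $\Delta$.

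For the boundary analysis you do not need full Mumford goodness. It suffices to have two-sided bounds $C^{-1}(-\log|z|)^{-N}\le|s|^2\le C(-\log|z|)^{N}$ for a local frame $s$ of $Q^{-1}$ near each point of $h^{-1}(\Delta)$. These follow from the Masur--Wolpert asymptotics in the frame $t_i\partial_{t_i},\partial_{s_j}$. Since $\log|s|^2$ is subharmonic, it extends across the puncture with Lelong number $0$. Hence $\deg Q^{-1}=\int_{h^{-1}(\sM_{g,n})}c_1\le 0$.

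\textbf{Slips to fix.}
\begin{enumerate}
\item Your identification is wrong. The correct one is $\Omega^1_{\csM_{g,n}}(\log\Delta)\cong\pi_*\bigl(\omega_\pi^{\otimes2}(\sum_i\sigma_i)\bigr)$. Using $\Omega_\pi$ gives the non-log behaviour at the nodes, and the extra twist by $\sum\sigma_i$ gives rank $3g-3+2n$. The WP argument does not use this formula, so nothing breaks.
\item In the stratum sequence the residues give the trivial quotient, not the sub:
\[
0\to\bigoplus_v \mathrm{pr}_v^*\Omega^1_{\csM_{g_v,n_v}}(\log\Delta_v)\to\xi_\Gamma^*\Omega^1_{\csM_{g,n}}(\log\Delta)\to\cO^{E(\Gamma)}\to0 .
\]
This is harmless, since an extension of nef bundles is nef in either order.
\item In your algebraic alternative, the worry about the nodes is unfounded: log Kodaira--Spencer gives an isomorphism there. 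The real obstruction is at the markings. $\pi_*\bigl(\omega_\pi^{\otimes2}(\Sigma)\bigr)$ is the kernel of the quadratic-residue map $\pi_*\bigl(\omega_\pi(\Sigma)^{\otimes2}\bigr)\to\cO^n$, so it is a subbundle, and ``quotients of nef bundles are nef'' does not apply. This is exactly why the paper uses semipositivity only for $n=0$ and then inducts via the forgetful map.
\end{enumerate}
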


\begin{defn}[nef bundle on a DM stack; {\cite[Def.~1.6]{Litt}}]\label{def:nef}
Let $\mathcal Y$ be a Deligne--Mumford stack of finite type over $\mathbb C$
and $\cE$ a vector bundle on $\mathcal Y$. We say $\cE$ is \emph{nef} if for
every morphism $h\colon B\to\mathcal Y$ from a smooth projective
\emph{curve} $B$ and every quotient line bundle $h^*\cE\twoheadrightarrow\cL$
on $B$ one has $\deg_B\cL\ge 0$.
\end{defn}

\begin{rem}
The test object $B$ in Definition~\ref{def:nef} is a \emph{scheme}, and
$h^*\cE$ is an ordinary vector bundle on it, so ``nef'' is a condition
about schemes. Nefness is stable under pullback and, for a finite
surjective morphism $B'\to B$ of smooth projective curves, $h^*\cE$ is nef
if and only if  its pullback to $B'$ is nef \cite[Prop.~6.1.7(iv)]{Lazarsfeld}. For a
vector bundle on a smooth projective scheme, Definition~\ref{def:nef}
agrees with nefness of $\cO_{\mathbb P(\cE)}(1)$ \cite[Thm.~6.2.12]{Lazarsfeld}.
\end{rem}

Let $\pi:\overline{\cC_g} \to \csM_g$ denote the universal curve, and let $\omega_\pi$ denote the relative
dualizing sheaf.

\begin{lem}\label{lem:identify}
Let $g \geq 2$.
There is a canonical isomorphism of vector bundles on $\csM_g$
\begin{equation}\label{eq:identify}
\Omega^1_{\csM_g}(\log\Delta)\;\cong\;\pi_*\bigl(\omega_\pi^{\otimes 2}\bigr).
\end{equation}
Moreover $\pi_*\omega_\pi^{\otimes 2}$ is locally free of rank $3g-3$ and
its formation commutes with arbitrary base change.
\end{lem}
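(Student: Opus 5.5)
The plan is to obtain the isomorphism \eqref{eq:identify} by combining the deformation theory of stable curves with Serre duality. First I will establish the local freeness and base change statement, and then identify the fiber of the log cotangent bundle, which is the harder part.

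For local freeness and base change, fix a stable curve $C$ of genus $g\ge 2$. The sheaf $\omega_C$ is ample of degree $2g-2$. By Serre duality, $H^1(C,\omega_C^{\otimes 2})\cong H^0(C,\omega_C^{\otimes -1})^\vee$, and this vanishes. Indeed, $\omega_C$ has positive degree on every irreducible component because $C$ is stable, so $\omega_C^{-1}$ has negative degree on every component and has no nonzero sections. Riemann--Roch then gives $h^0(\omega_C^{\otimes 2})=2(2g-2)-g+1=3g-3$. Since $\pi$ is flat and proper and the $H^1$ vanishes on every fiber, cohomology and base change shows that $\pi_*\omega_\pi^{\otimes 2}$ is locally free of rank $3g-3$ and that its formation commutes with arbitrary base change. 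Because $\csM_g$ is a DM stack, I will run this argument on an étale atlas and descend.

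For the isomorphism, I will use that at $[C]$ the tangent space $T_{[C]}\csM_g=\mathrm{Ext}^1(\Omega_C,\cO_C)$. Serre duality on the Gorenstein curve $C$ identifies its dual with $\mathrm{Hom}(\Omega_C,\omega_C^{\otimes 2})$. The local-to-global sequence splits this $\mathrm{Ext}^1$ into $H^1(C,T_C)$, which gives locally trivial deformations, together with $\bigoplus_{p}\mathcal{E}xt^1(\Omega_C,\cO_C)_p$, with one line for each node $p$ that smooths it. The log tangent space $T_{\csM_g}(-\log\Delta)$ at $[C]$ is the kernel of the map to the node-smoothing directions, so it consists of the equisingular deformations. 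This kernel is dual to $H^0(C,\omega_C^{\otimes 2})$, because $\mathrm{Hom}(\Omega_C,\omega_C^{\otimes 2})\to H^0(\omega_C^{\otimes 2})$ dualizes the local-to-global sequence. More precisely, sections of $\omega_C^{\otimes 2}$ are quadratic differentials with at most simple poles at the nodes, satisfying residue matching on the two branches, and $\mathrm{Hom}(\Omega_C,\omega_C^{\otimes2})$ is the subspace of quadratic differentials that are more regular at the nodes. The pairing therefore matches the annihilator of the smoothing directions with the log cotangent fiber.

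To globalize the fiberwise identification, I will use the relative Kodaira--Spencer map for the family $\overline{\cC_g}\to\csM_g$. I expect this to be the main obstacle: producing a canonical global morphism $\Omega^1_{\csM_g}(\log\Delta)\to\pi_*\omega_\pi^{\otimes 2}$ and checking that it is an isomorphism at the boundary, where the Kodaira--Spencer map with log poles has to be analyzed. My first approach is to use the log smooth structure. With the log structure on $\csM_g$ given by $\Delta$ and the induced log structure on $\overline{\cC_g}$, the morphism $\pi$ is log smooth, and $\omega_\pi=\Omega^1_{\overline{\cC_g}/\csM_g}(\log)$ is the relative log dualizing sheaf. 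The log Kodaira--Spencer class lives in $\mathrm{Ext}^1(\Omega^1_\pi(\log),\pi^*\Omega^1_{\csM_g}(\log\Delta))$. Relative duality then gives a morphism $T_{\csM_g}(-\log\Delta)\to R^1\pi_*(T_\pi(\log))\cong(\pi_*\omega_\pi^{\otimes 2})^\vee$. Since both sides are locally free of rank $3g-3$, it is enough to check that this map is an isomorphism on fibers. The fiberwise identification from the previous step, together with the universality of the versal family in the local coordinates $t_i$ near a node $xy=t_i$ (where $dt_i/t_i$ corresponds to the residue of the quadratic differential), should give that.
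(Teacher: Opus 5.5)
\textbf{There is a genuine gap in the fibrewise check.} Your local freeness and base-change argument is correct; it is the paper's second sentence spelled out. The architecture of your last paragraph is also the paper's route, which the paper takes by citing K.~Kato and F.~Kato: log smoothness of $\pi$ with $\omega_\pi=\Omega^1_\pi(\log)$, the log Kodaira--Spencer map $T_{\csM_g}(-\log\Delta)\to R^1\pi_*\omega_\pi^{-1}\cong(\pi_*\omega_\pi^{\otimes 2})^\vee$, equality of ranks, then a check on fibres.

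The gap is that fibrewise check at boundary points. You derive it explicitly from your second paragraph, and that paragraph is wrong in two ways.

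First, Serre duality gives $\mathrm{Ext}^1(\Omega_C,\cO_C)^\vee\cong\rH^0(C,\Omega_C\otimes\omega_C)$, not $\mathrm{Hom}(\Omega_C,\omega_C^{\otimes2})$. Already for smooth $C$ the latter is $\rH^0(C,\omega_C)$, which has dimension $g$.

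Second, and more seriously, the fibre of $T_{\csM_g}(-\log\Delta)$ at a curve $C$ with $\delta$ nodes is not a subspace of $T_{[C]}\csM_g$. In local coordinates with $\Delta=\{t_1\cdots t_\delta=0\}$ it is spanned by $t_1\partial_{t_1},\dots,t_\delta\partial_{t_\delta},\partial_{t_{\delta+1}},\dots,\partial_{t_{3g-3}}$. So it has dimension $3g-3$, and the natural map to $T_{[C]}\csM_g$ kills every $t_i\partial_{t_i}$. Only its \emph{image} is the equisingular space $\rH^1(C,T_C)$, of dimension $3g-3-\delta$. Your ``kernel'' is this image, so it cannot be dual to $\rH^0(C,\omega_C^{\otimes2})$, which has dimension $3g-3$ by your own count. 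It is dual instead to quadratic differentials on the normalization with at most simple poles over the nodes. A general section of $\omega_C^{\otimes2}$ has double poles on both branches with equal leading coefficients, not simple poles with residue matching as you wrote.

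Dually, the map $\rH^0(C,\Omega_C\otimes\omega_C)\to\rH^0(C,\omega_C^{\otimes2})$ induced by $\Omega_C\to\omega_C$ has a $\delta$-dimensional kernel (the torsion of $\Omega_C$ at the nodes) and an image of codimension $\delta$. So no sub/quotient argument inside the ordinary (co)tangent space can yield the identification. This is exactly where log geometry has to enter.

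\textbf{How to close it.} You have two options.
\begin{enumerate}
\item Cite F.~Kato's theorem that $\csM_g$ with its boundary log structure is the universal log smooth deformation space. This makes the log Kodaira--Spencer map an isomorphism, and it is the paper's argument.
\item Redo the fibre computation correctly. Put $T_C=\sheafhom(\Omega_C,\cO_C)$. Near $xy=0$ it is generated by $x\partial_x$ and $y\partial_y$, and $\omega_C^{-1}$ sits inside it as the span of $x\partial_x-y\partial_y$. This gives an exact sequence $0\to\omega_C^{-1}\to T_C\to\bigoplus_p\bbC_p\to0$. Together with $\rH^0(C,T_C)=0$, it exhibits $\rH^1(C,\omega_C^{-1})$ as an extension of $\rH^1(C,T_C)$ by $\bbC^\delta$. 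This parallels the log tangent fibre, which is an extension of the equisingular space by $\bigoplus_i\bbC\,t_i\partial_{t_i}$. The log Kodaira--Spencer map is compatible with these two extensions and induces the classical, bijective Kodaira--Spencer map on the quotients. A local computation in the model $xy=t_i$ then shows that it sends $t_i\partial_{t_i}$ to a generator of the $i$-th node summand.
\end{enumerate}
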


\begin{proof}
The isomorphism follows from the
application of logarithmic Kodaira-Spencer Theory \cite[Prop. 3.14]{Kato} and the interpretation of $\csM_g$ in terms of log geometry \cite{FKato}.
Since for any stable curve $C$ in $\csM_g$, we have  $\dim \rH^0(C, \omega_C^{\otimes 2})=3g-3$ and
$\rH^1(C,\omega_C^{\otimes 2})=0$, the remaining statement follows.

\end{proof}

\begin{prop}\label{prop:nef}
$\Omega^1_{\csM_g}(\log\Delta)$ is nef.
\end{prop}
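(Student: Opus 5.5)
Using Lemma~\ref{lem:identify}, we will show that $\pi_*(\omega_\pi^{\otimes 2})$ is nef in the sense of Definition~\ref{def:nef}. Fix a smooth projective curve $B$ and a morphism $h\colon B\to\csM_g$. Since the formation of $\pi_*\omega_\pi^{\otimes 2}$ commutes with base change, $h^*\Omega^1_{\csM_g}(\log\Delta)\cong p_*\omega_{S/B}^{\otimes 2}$, where $p\colon S\to B$ is the pulled-back family of stable curves. We may pass to a finite cover $B'\to B$ (allowed by the Remark after Definition~\ref{def:nef}). This lets us assume the family comes from an honest morphism to the stack, and that $S$ is a scheme with only $A_k$-type singularities along the nodes of the fibres. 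Resolving these singularities gives a semistable family $\tilde p\colon \tilde S\to B$ with $\tilde S$ smooth. Since $\omega_{\tilde S/B}$ pulls back from $\omega_{S/B}$ (the resolution is crepant over $B$), we get $\tilde p_*\omega_{\tilde S/B}^{\otimes 2}=p_*\omega_{S/B}^{\otimes 2}$.

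It then suffices to prove that $\tilde p_*\omega_{\tilde S/B}^{\otimes 2}$ is nef for a semistable fibration of a smooth surface over a curve, with fibres of genus $g\ge 2$ and stable models. This is the classical semipositivity theorem for pluricanonical direct images, due to Viehweg, Kollár and Fujita. I would cite it in the form ``$f_*\omega_{S/B}^{\otimes m}$ is nef for $m\ge 1$ when $f$ is a semistable family of curves''. If a self-contained argument is preferred, there are two routes. The first is Kollár's method: write $\omega^{\otimes 2}=\omega\otimes\omega$ and use that $\omega_{S/B}$ is nef on $S$ by Arakelov's theorem, together with Fujita's nefness of $f_*\omega_{S/B}$ and a cyclic-cover trick. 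The second, more direct, route uses that the stable model gives $\omega_{S/B}$ relatively ample on $S$. Then surjectivity of the multiplication map $\mathrm{Sym}^2 f_*\omega\to f_*\omega^{\otimes 2}$ fails only for hyperelliptic fibres, so I would prefer citing Viehweg's weak positivity directly.

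The main obstacle is the reduction step: handling the stacky nature of $\csM_g$ and the non-smooth total space of the pulled-back family. In particular, one must make sure that any quotient line bundle of $h^*\Omega^1(\log\Delta)$ is compared correctly after base change and resolution. Nefness is insensitive to finite base change, so the issue is only that the identification of Lemma~\ref{lem:identify} survives. This follows from its base-change compatibility and from crepancy of the minimal resolution of $A_k$ singularities. Once that is in place, nefness of $\tilde p_*\omega^{\otimes 2}$ gives $\deg\cL\ge0$ for every quotient $h^*\Omega^1_{\csM_g}(\log\Delta)\twoheadrightarrow\cL$, proving the proposition.
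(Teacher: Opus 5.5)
Your argument works for curves with $h(B)\not\subset\Delta$. There the generic fibre of $p\colon S\to B$ is smooth, and $S$ is normal with isolated singularities $xy=t^k$. The minimal resolution is then crepant, and Viehweg-type weak positivity gives nefness of $\tilde p_*\omega_{\tilde S/B}^{\otimes 2}$.

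The gap is the case $h(B)\subset\Delta$, which cannot be skipped. Nefness must be tested on every curve, and boundary curves are exactly where a logarithmic cotangent bundle could fail to be nef. If $h(B)\subset\Delta$, every fibre of $p$ is singular, and some node persists over all of $B$. Near that node $S$ is analytically $\{xy=0\}\times B$. So $S$ is non-normal and singular along a curve. Your claim that $S$ has only $A_k$-singularities at nodes of fibres is therefore false. No crepant resolution yields a semistable fibration with smooth total space and smooth generic fibre of genus $g$, so the Viehweg--Koll\'ar--Fujita theorem has nothing to apply to.

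Normalizing does not immediately rescue the argument. After a finite base change, the normalization $\nu\colon S^\nu\to S$ is a family of lower-genus curves with marked sections $\Sigma$ over the persistent nodes, and $\nu^*\omega_{S/B}=\omega_{S^\nu/B}(\Sigma)$. Then $p_*\omega_{S/B}^{\otimes 2}$ is, up to quadratic-residue terms, a direct image of $\omega_{S^\nu/B}^{\otimes 2}(\Sigma)$ for pointed curves. It is no longer a pluricanonical direct image of a smooth fibration, and its nefness would need a separate logarithmic or inductive positivity argument that your proposal does not contain.

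The paper avoids this by never resolving. It applies Fujino's semipositivity theorem for semi-log-canonical families \cite[Theorem~1.7]{FujinoModuli} directly to $X=B\times_{\csM_g}\overline{\cC}_g$. It checks that $X$ is $S_2$, normal crossing in codimension one (this covers the local model $\{xy=0\}\times B$), slc, and that every irreducible component dominates $B$. These checks are precisely what make boundary curves admissible.

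Separately, your finite base change step is vacuous. $h$ is already a morphism to the stack, so the family $S\to B$ is given from the start.
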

\begin{proof}
Let $h\colon B\to\overline{\sM}_g$ be a morphism from a smooth projective
curve. By Definition~\ref{def:nef} it suffices to show that
$h^*\Omega^1_{\overline{\sM}_g}(\log\Delta)$ is nef on $B$.

Set $X:=B\times_{\overline{\sM}_g}\overline{\mathcal C}_g$ with projection
$f\colon X\to B$. The morphism $\pi$ is representable, so $X$
is a scheme, projective over $B$; thus $f$ is a flat projective family of stable
curves over $B$. Since the dualizing sheaf commutes with the base change $h$,
Lemma~\ref{lem:identify} gives
\begin{equation}\label{eq:pullback}
  h^*\Omega^1_{\overline{\sM}_g}(\log\Delta)
   \;\simeq\;h^*\pi_*\omega_\pi^{\otimes2}
   \;\simeq\;f_*\bigl(\omega_{X/B}^{\otimes2}\bigr).
\end{equation}

We check the hypotheses of \cite[Theorem 1.7]{FujinoModuli} for $f\colon X\to B$.
The fibres of $f$ are nodal, hence Gorenstein, so $X$ is Gorenstein and in
particular satisfies Serre's condition $S_2$; moreover $\omega_{X/B}$ is a line
bundle. Along the locus where a node persists in the fibres, $X$ is analytically
$\{xy=0\}\times B$, and at an isolated node it is the surface singularity
$\{xy=s^k\}$, which is of codimension two in $X$; hence $X$ is normal crossing in
codimension one, and its singularities are normal crossing or Du Val, so in
either case semi-log-canonical. Since $f$ is flat with one-dimensional fibres and
$B$ is a smooth curve, every irreducible component of $X$ has dimension two and
is therefore dominant onto $B$. Finally $\omega_{X/B}^{\otimes3}$ is $f$-very
ample, so it is $f$-generated.

By \cite[Theorem 1.7]{FujinoModuli}, $f_*\omega^{\otimes m}_{X/B}$ is nef on $B$
for every $m\ge1$. Taking $m=2$ and using \eqref{eq:pullback} completes the proof.
\end{proof}

To finish the proof of theorem \ref{thm:Omeganef}, we need the following lemmas.

\begin{lem}\label{lem:induction-n}
Suppose that $2g-2+n>0$. If $\Omega^1_{\overline{\sM}_{g,n}}(\log\Delta)$
is nef, then $\Omega^1_{\overline{\sM}_{g,n+1}}(\log\Delta)$ is nef.
\end{lem}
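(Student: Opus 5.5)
We use the forgetful map $p\colon \csM_{g,n+1}\to\csM_{g,n}$, which is the universal curve over $\csM_{g,n}$, and compare log cotangent bundles through the logarithmic relative cotangent sequence. Write $\Delta'$ for the boundary of $\csM_{g,n+1}$. The inverse image $p^{-1}(\Delta)$ is contained in $\Delta'$. The extra components of $\Delta'$ are the sections $\sigma_i$ ($i=1,\dots,n$) where the new point collides with an old one; these are divisors $D_{0,\{i,n+1\}}$. In log geometry $p$ is log smooth, and we get an exact sequence of vector bundles on $\csM_{g,n+1}$:
\[
0\to p^*\Omega^1_{\csM_{g,n}}(\log\Delta)\to \Omega^1_{\csM_{g,n+1}}(\log\Delta')\to \omega_p^{\log}\to 0 .
\]
Here $\omega_p^{\log}=\omega_p(\sum_i\sigma_i)$ is the log relative dualizing sheaf. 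This is the standard Knudsen/F.~Kato description: the cokernel is the rank-one relative log cotangent sheaf of the universal log curve, which is a line bundle because $p$ is log smooth of relative dimension one. I would justify the sequence via \cite{FKato} and \cite{Kato}, exactly as in Lemma~\ref{lem:identify}.

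Nefness is preserved under pullback and extensions, and the first term is nef by hypothesis. So it suffices to show that $\omega_p(\sum_i\sigma_i)$ is nef on $\csM_{g,n+1}$. By Definition~\ref{def:nef}, take $h\colon B\to\csM_{g,n+1}$ with $B$ a smooth projective curve. We must show $\deg h^*\omega_p(\sum\sigma_i)\ge0$. The composite $p\circ h$ gives a family $f\colon X\to B$ of stable $n$-pointed curves with sections $s_1,\dots,s_n$. The map $h$ itself corresponds to an extra section $s$, after passing to the stabilization $X'\to X$; equivalently, $h$ factors through the universal curve $X'=B\times_{\csM_{g,n}}\csM_{g,n+1}$. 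Then $h^*\omega_p(\sum\sigma_i)=s^*\omega_{X'/B}(\sum s_i)$.

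Now split into cases. If $h(B)\subset\sigma_i$, then by the standard identity $\sigma_i^*\omega_p(\sigma_i)=\mathcal O$, the degree is $0$ plus contributions $\sigma_i\cdot\sigma_j\ge0$ for $j\ne i$ (in fact the sections are disjoint). Otherwise $h(B)$ is not contained in the support of $\sum\sigma_i$, so it suffices to show $\omega_p$ (or $\omega_p(\sum\sigma_i)$) is nef on curves not inside the sections. For this I would use that $\omega_{X/B}(\sum s_i)$ is nef on the total space of a family of stable pointed curves over a curve. This follows either from Keel's result that $\psi$-classes and $\omega_p(\sum\sigma_i)$ are nef on $\csM_{g,n+1}$, or directly: the pair $(X,\sum s_i)$ is slc over $B$, and $f_*(\omega_{X/B}(\sum s_i))^{\otimes m}$ is nef by \cite[Theorem 1.7]{FujinoModuli} in its log/pair version. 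Combined with relative ampleness of $\omega_{X/B}(\sum s_i)$, this gives nefness of the line bundle itself, via the standard argument: a relatively ample line bundle whose pushforwards of powers are nef and globally generated relatively is nef.

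The main obstacle is this last step, nefness of $\omega_p(\sum\sigma_i)$ on the universal curve, including curves lying in the boundary. I would first try to cite Keel's or Kollár's theorem that $\omega_{\overline{\mathcal C}/\csM}(\sum\sigma_i)$ is nef, i.e.\ semipositivity of the log dualizing sheaf of stable pointed curves. A second delicate point is verifying the log cotangent exact sequence at points where $p$ contracts rational bridges.
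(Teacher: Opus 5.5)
Your proposal is correct and follows essentially the same route as the paper: the log-smooth relative cotangent sequence for the forgetful map $\csM_{g,n+1}\to\csM_{g,n}$, identification of the cokernel with $\omega_p(\sum_i\sigma_i)$ (the paper does this by taking determinants and using $\Delta_{g,n+1}=\pi^*\Delta_{g,n}+\sum_i\sigma_i$), and closure of nefness under extensions. The paper handles your ``main obstacle'' exactly by your first option, citing Keel's Theorem 0.4 for nefness of $\omega_\pi(\sum_i\sigma_i)$, which covers every test curve at once and makes your case split unnecessary; note also that $\omega_p$ by itself is not nef in general (it has degree $-1$ on a rational tail carrying two markings), so only the twisted bundle works in your second case.
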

 
\begin{proof}
Let $\pi\colon\overline{\sM}_{g,n+1}\to\overline{\sM}_{g,n}$ be the
morphism given by forgetting the last point and stabilizing (\cite{Knudsen}). Note that this identifies
$\overline{\sM}_{g,n+1}$ with the universal curve over
$\overline{\sM}_{g,n}$. Let $\sigma_1,\dots,\sigma_n$ be its
tautological sections. Write $\Delta_{g,n}$ and $\Delta_{g,n+1}$ for the
respective boundaries. Moreover, we have the following relations for the boundary divisors (\cite[Proof of 2.7]{Knudsen}):
\[
  \Delta_{g,n+1}\;=\;\pi^*\Delta_{g,n}+\sum_{i=1}^n\sigma_i .
\]
Since $\pi$ is log smooth for the log structures determined by these boundaries,
there is an exact sequence of locally free sheaves
\begin{equation}\label{eq:Omegaseq}
  0\to\pi^*\Omega^1_{\overline{\sM}_{g,n}}(\log\Delta_{g,n})
   \to\Omega^1_{\overline{\sM}_{g,n+1}}(\log\Delta_{g,n+1})
   \to\Omega^1_\pi(\log)\to0 ,
\end{equation}
whose right-hand term has rank one (since the morphism is log smooth of relative dimension 1). Taking determinants in
\eqref{eq:Omegaseq} and using the identity above we obtain:
\[
  \Omega^1_\pi(\log)
  \;\simeq\;\omega_{\overline{\sM}_{g,n+1}}(\Delta_{g,n+1})
     \otimes\pi^*\bigl(\omega_{\overline{\sM}_{g,n}}(\Delta_{g,n})\bigr)^{-1}
  \;\simeq\;\omega_\pi\Bigl(\sum_{i=1}^n\sigma_i\Bigr),
\]
 The sheaf on the left of
\eqref{eq:Omegaseq} is nef by hypothesis, and the sheaf on the right is known to be nef
(\cite[Theorem 0.4]{Keel}). Since an extension of nef bundles is nef \cite{Lazarsfeld},
the middle term is nef.
\end{proof}

\begin{lem}\label{lem:base-n}
  $\Omega^1_{\csM_{g,n}}(\log \Delta)$ is nef, when $(g,n)=(0,3)$ or $(g,n)=(1,1)$  
\end{lem}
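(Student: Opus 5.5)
We treat the two base cases separately, since in each case the stack and its boundary are explicit.

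\emph{Case $(g,n)=(0,3)$.} Here $\csM_{0,3}=\sM_{0,3}$ is a point and $\Delta=\emptyset$. Hence $\Omega^1_{\csM_{0,3}}(\log\Delta)=0$, the zero bundle, which is trivially nef: there are no nonzero quotient line bundles on any test curve $B$, so Definition~\ref{def:nef} holds vacuously.

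\emph{Case $(g,n)=(1,1)$.} The stack $\csM_{1,1}$ is one-dimensional and $\Delta$ is the single point $[\infty]$ given by the nodal rational curve. So $\Omega^1_{\csM_{1,1}}(\log\Delta)$ is a line bundle, and for a line bundle nefness is just nonnegativity of degree on every test curve. I would identify it via log Kodaira--Spencer theory, exactly as in Lemma~\ref{lem:identify}. The log cotangent space at a curve $E$ is dual to $\rH^1(E,T_E(-p))$, that is, to $\rH^0(E,\omega_E^{\otimes 2}(p))$. For a stable one-pointed curve of arithmetic genus one we have $\omega_E\cong\cO_E$, so this space is $\rH^0(E,\cO_E(p))$, which is one-dimensional. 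Globally, with $\pi\colon\overline{\cC}\to\csM_{1,1}$ the universal curve and $\sigma$ its section, this gives
\[
  \Omega^1_{\csM_{1,1}}(\log\Delta)\;\cong\;\pi_*\bigl(\omega_\pi^{\otimes2}(\sigma)\bigr)\;\cong\;\lambda^{\otimes 2},
\]
where $\lambda=\pi_*\omega_\pi$ is the Hodge bundle. The second isomorphism holds because $\omega_\pi\cong\pi^*\lambda$ in genus one, so $\pi_*(\pi^*\lambda^{\otimes2}(\sigma))=\lambda^{\otimes2}\otimes\pi_*\cO(\sigma)=\lambda^{\otimes2}$. This matches the classical fact that weight-$2$ modular forms with a simple zero at the cusp pair with $dq/q$, i.e.\ $\Omega^1(\log\infty)\cong\lambda^{\otimes 2}$.

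It then remains to show $\lambda$ is nef. Given $h\colon B\to\csM_{1,1}$, pull back to the family $f\colon X\to B$ of stable one-pointed genus-one curves. Then $h^*\lambda=f_*\omega_{X/B}$, which is nef by the same Fujino theorem \cite[Theorem 1.7]{FujinoModuli} used in Proposition~\ref{prop:nef}, with $m=1$. Alternatively one can use that $\deg\lambda=1/24>0$ on $\csM_{1,1}$, together with the fact that any nonconstant $h$ is finite onto this one-dimensional stack. With that alternative, the degree of $h^*\lambda^{\otimes2}$ is a positive multiple of $\deg\lambda$ when $h$ is nonconstant, and $0$ when $h$ is constant.

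The main obstacle is making the identification $\Omega^1(\log\Delta)\cong\lambda^{\otimes2}$ rigorous on the stack. The point $p$ imposes the twist, and one must check that the generic $\pm1$ automorphism does not spoil the isomorphism of line bundles. I expect the degree route to be cleanest: compute $\deg\Omega^1_{\csM_{1,1}}(\log\Delta)=2g-2+\#\Delta$ in the orbifold sense, which is $-\tfrac{1}{2}\cdot\tfrac{1}{6}\cdot\ldots$ corrected by the stacky points. Equivalently, pass to a finite étale level cover ${}_\Gamma\csM_{1,1}$, which is a modular curve $\overline{X(N)}$ with $N\ge3$. On that cover the log canonical degree $2g-2+\#\text{cusps}$ is positive because the open curve is hyperbolic. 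Nefness then descends through the finite cover, by the remark following Definition~\ref{def:nef}.
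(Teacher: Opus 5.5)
Your proof is correct. For $(0,3)$ it is the paper's argument (``trivially true''). For $(1,1)$ the paper gives no argument beyond ``standard'', so you are supplying one rather than diverging from one.

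Your primary route is the pointed log Kodaira--Spencer isomorphism $\Omega^1_{\csM_{1,1}}(\log\Delta)\cong\pi_*(\omega_\pi^{\otimes2}(\sigma))\cong\lambda^{\otimes2}$, followed by nefness of the Hodge bundle. Its merit is that it handles the base case by the same mechanism as Lemma~\ref{lem:identify} and Proposition~\ref{prop:nef}. It does rely on the $n>0$, genus-one version of the identification, which the paper only states for $n=0$, $g\ge2$.

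If you invoke \cite[Theorem 1.7]{FujinoModuli}, you cannot copy the paper's hypothesis check verbatim. In genus one no power of $\omega_{X/B}$ is $f$-very ample. Relative generation still holds, because $\omega_{X/B}\cong f^*h^*\lambda$.

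Your degree argument is cleaner and makes Fujino unnecessary: $\deg\lambda=\tfrac1{24}>0$, and every nonconstant $h$ is finite onto the one-dimensional stack.

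The level-cover route is the most elementary, and it uses only tools already in the paper. Pulling back to $\overline{X(N)}$ with $N\ge3$ gives $\Omega^1_{\overline{X(N)}}(\log\text{cusps})$. Its degree is $2g-2+c=-\chi(Y(N))>0$. Nefness then descends by the remark after Definition~\ref{def:nef}. This route avoids any stacky identification with $\lambda$.

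There are three small corrections, none load-bearing.
\begin{enumerate}
\item The level cover is \'etale only over $\sM_{1,1}$. At the cusps it is Kummer ($t=z^N$), i.e.\ log \'etale. It is exactly log \'etaleness that gives $\overline{p}^{\,*}\Omega^1_{\csM_{1,1}}(\log\Delta)\cong\Omega^1_{\overline{X(N)}}(\log\text{cusps})$, as in Section~\ref{sec:appstomoduli}.
\item Your unfinished orbifold computation has the wrong sign. It should give $\deg\Omega^1_{\csM_{1,1}}(\log\Delta)=-\chi^{\mathrm{orb}}(\sM_{1,1})=+\tfrac1{12}$, which agrees with $\deg\lambda^{\otimes2}=2\cdot\tfrac1{24}$.
\item In the modular-forms aside, it is weight-$2$ forms holomorphic at the cusp that give sections $f\,dq/q$ of $\Omega^1(\log\infty)$. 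Those vanishing at the cusp give holomorphic differentials.
\end{enumerate}
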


\begin{proof}
    This is trivially true for the first case, and standard for the second.
\end{proof}

\begin{proof}[Proof of theorem \ref{thm:Omeganef}]
 This follows from proposition \ref{prop:nef}, lemmas \ref{lem:base-n}, \ref{lem:induction-n} and induction on $n$.
\end{proof}

As an application of this, we give a slight refinement of the following theorem \cite[cor. 7.6]{ArapuraPatel}, with a new and simpler proof.

\begin{thm}\label{thm:strictpositivityMgn}
    If $2g-2+n>0$, then for any perverse sheaf $P$ on $\sM_{g,n}^{an}$, the orbifold Euler characteristic $\chi^{\mathrm{orb}}(\sM_{g,n}^{an},P)\ge 0$. Furthermore, the inequality is strict  if the support of $P$ is Zariski dense.
\end{thm}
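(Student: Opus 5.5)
The plan is to pass to a finite \'etale cover that is a scheme, apply Proposition~\ref{prop:rank-positivity} there, and then divide by the degree. Choose a torsion-free finite-index subgroup $\Gamma\subset\Gamma_{g,n}$, for instance a level-$\ell$ subgroup with $\ell\ge3$. Then $\cM={}_\Gamma\sM_{g,n}$ is a smooth quasiprojective variety, and $q:\cM\to\sM_{g,n}$ is finite \'etale of some degree $N$ as a map of stacks. By definition of the orbifold Euler characteristic,
\[
\chi^{\mathrm{orb}}(\sM_{g,n}^{an},P)=\frac1N\chi(\cM,q^*P).
\]
Since $q$ is \'etale, $q^*P$ is perverse on $\cM$, and it has Zariski dense support exactly when $P$ does. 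So it suffices to show $\chi(\cM,q^*P)\ge0$, with strict inequality when the support is dense.

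Next I would embed $\cM$ as a smooth closed subvariety of some $Z=\oZ\setminus D$ with $\Omega^1_{\oZ}(\log D)$ nef, so that Proposition~\ref{prop:rank-positivity} applies with $Y=Z=\cM$. The natural choice is to take $\oZ$ to be a smooth projective compactification of $\cM$ with SNC boundary lying over $\csM_{g,n}$. Such an $\oZ$ can be obtained by normalizing $\csM_{g,n}$ in the function field of $\cM$ and resolving, or by using the level-structure compactifications, whose boundary is normal crossings with the map to $\csM_{g,n}$ log \'etale. If the map $p:\oZ\to\csM_{g,n}$ is log \'etale, i.e.\ a Kummer cover branched only along $\Delta$, then
\[
\Omega^1_{\oZ}(\log D)\cong p^*\Omega^1_{\csM_{g,n}}(\log\Delta),
\]
and this is nef by Theorem~\ref{thm:Omeganef} together with the fact that nefness is preserved under pullback (Definition~\ref{def:nef}). \textbf{This is the main obstacle.} A coarse compactification of $\cM$ need not be smooth, and its boundary need not be SNC.

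To get around this, I would first try two things:
\begin{itemize}
\item Invoke Wu--Zhou's logarithmic index formula in its form for toroidal or Kummer log-smooth pairs, which tolerates quotient singularities.
\item Replace $\Gamma$ by a deeper subgroup, so that the local monodromy around each boundary component becomes unipotent, and then use a toroidal resolution that is log \'etale over $\csM_{g,n}$.
\end{itemize}
If the resolution is only log \'etale up to a log blowup, then $\Omega^1(\log)$ is still the pullback of the log cotangent bundle along the log blowup, so nefness survives in that case as well.

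Given such a pair, Proposition~\ref{prop:rank-positivity} with $Y=Z$, $n=3g-3+n$ gives
\[
\chi(\cM,q^*P)\ge r(-1)^{\dim}\chi(\cM)\ge0,
\]
using Lemma~\ref{lem:zero-section}. For strictness it remains to see that $\chi(\cM)\ne0$. This holds because $\chi(\cM)=N\chi^{\mathrm{orb}}(\sM_{g,n})$, and by Harer--Zagier the orbifold Euler characteristic is nonzero: $\chi^{\mathrm{orb}}(\sM_{g,1})=\zeta(1-2g)\ne0$, and the formula $\chi^{\mathrm{orb}}(\sM_{g,n+1})=(2-2g-n)\chi^{\mathrm{orb}}(\sM_{g,n})$ keeps it nonzero when $2g-2+n>0$. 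The genus-zero base cases are checked directly. Finally, Zariski density of the support of $P$ gives full support, so $r>0$, and dividing by $N$ completes the proof.
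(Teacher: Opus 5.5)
Your proposal is correct and follows essentially the paper's proof. You pass to a torsion-free level cover that is a scheme, and compactify it with SNC boundary so that the map to $\csM_{g,n}$ is log \'etale; the log cotangent bundle is then the pullback of the nef bundle of Theorem~\ref{thm:Omeganef}. You then apply Proposition~\ref{prop:rank-positivity}, use Harer--Zagier for $\chi\neq0$, and divide by the degree of the cover.

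The obstacle you flag is exactly what the paper settles by citing Boggi--Pikaart. Their dominating smooth Galois level covers have smooth compactifications with SNC boundary and Kummer-type (log \'etale) maps $t_i=z_i^{e_i}$ to $\csM_{g,n}$, which is your second workaround made precise. So no version of Wu--Zhou for quotient singularities is needed.
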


\begin{proof}
We pass to the level $\lambda$ so that the moduli space $\sM_{g,n,\lambda}$ is a scheme and consider its Boggi--Pikaart (\cite{BoggiPikaart}) compactification $\csM_{g,n,\lambda}$. The `forget the level structure' morphism 
$$\sM_{g,n,\lambda} \rightarrow \sM_{g,n}$$ is etale and extends to a log-etale morphism on the compactifications (see section \ref{sec:appstomoduli} for details). It follows that the log cotangent bundle on $\csM_{g,n,\lambda}$ is the pullback of the log cotangent bundle on $\csM_{g,n}$ and therefore, by Theorem \ref{thm:Omeganef}, it is nef. Since the level $\lambda$ moduli spaces are schemes we may apply the log Dubson--Kashiwara formula and Fulton--Lazarsfeld positivity (as in \cite[lemma 2.2]{ArapuraPatel}) to conclude that every perverse sheaf on $\sM_{g,n,\lambda}$ has nonnegative Euler characteristic. Since this is a finite etale cover of $\sM_{g,n}$, one obtains the analogous assertion for the orbifold Euler characteristic of perverse sheaves on $\sM_{g,n}$. 

The second part of the statement also holds for $\cM_{g,n,\lambda}$ as a consequence of Proposition \ref{prop:rank-positivity} and the fact that ( \cite{HarerZagier} or \cite{McMullen}) $(-1)^N\chi^{\mathrm{orb}}(\cM_{g,n})>0$ with $N = \dim(\cM_{g,n})$ (and therefore $(-1)^N\chi(\cM_{g,n,\lambda})>0$).
One deduces the statement for $\cM_{g,n}$ by once again descending via this etale cover. 
\end{proof}

\section{Proofs of Corollaries \ref{cor:moduliofcurves} and \ref{cor:moduliofav} }\label{sec:appstomoduli}

\begin{proof}[Proof of Corollary~\ref{cor:moduliofcurves}]
First form the canonical compactified level space associated with
$q$ by normalizing $\csM_{g,n}$ in the function field of
$\mathcal M$. Thus $q$ extends to a finite compactified level
morphism.

Applying \cite[Corollary~2.10]{BoggiPikaart} to the corresponding
finite compactified cover, we obtain a connected smooth finite
Galois compactified level cover $\overline{\mathcal M'}$ dominating
it. Restricting the dominating map to the inverse image of the open
moduli stack gives a commutative diagram
$$
\xymatrix{
 \cM'\ar[r]^{q'}\ar[rd]^{p} & \cM\ar[d]^{q} \\ 
  & \sM_{g,n}
}
$$

By the definition of domination for level structures, $q'$ is a
finite \'{e}tale cover. In particular, $\mathcal M'$ is smooth and
connected.

The map $p=q \circ q'$ induces a
finite morphism
\[
  \overline p:\overline{\mathcal M'}
  \longrightarrow\csM_{g,n}.
\]
Since $\overline{\mathcal M'}$ is finite over the projective coarse
moduli space $\overline M_{g,n}$, it is projective.

Set
\[
  E=\overline{\mathcal M'}\setminus\mathcal M',
  \qquad
  \Delta=\csM_{g,n}\setminus\mathscr M_{g,n}.
\]
The local monodromy description of the compactified level cover shows
that $E$ is a normal-crossing divisor. Moreover,
\cite[Proposition~2.1]{BoggiPikaart} rules out self-intersections of
its irreducible components. Hence $E$ is an SNC divisor.

The compactified level morphism is logarithmically \'{e}tale along
the boundary. More precisely, \'{e}tale locally near a boundary
point, one can choose coordinates such that
\[
  t_i=z_i^{e_i}
  \quad (1\leq i\leq r),
  \qquad
  t_j=z_j
  \quad (r<j\leq 3g-3+n),
\]
where $t_1\cdots t_r=0$ and $z_1\cdots z_r=0$ are local equations for
$\Delta$ and $E$, respectively. Consequently,
\[
  \overline p^{\,*}\!\left(\frac{dt_i}{t_i}\right)
  =
  e_i\frac{dz_i}{z_i}
  \quad (1\leq i\leq r),
  \qquad
  \overline p^{\,*}(dt_j)=dz_j
  \quad (r<j\leq 3g-3+n).
\]
Since each $e_i$ is nonzero over $\mathbb C$, pullback induces an
isomorphism
\[
  \Omega^1_{\overline{\mathcal M'}}(\log E)
  \simeq
  \overline p^{\,*}
  \Omega^1_{\csM_{g,n}}(\log\Delta).
\]
By Theorem~\ref{thm:Omeganef},
\[
  \Omega^1_{\csM_{g,n}}(\log\Delta)
\]
is nef. Since the pullback of a nef vector bundle is nef, it follows
that
\[
  \Omega^1_{\overline{\mathcal M'}}(\log E)
\]
is nef.

Let $e=\deg(p)$. Since $p$ is a finite \'{e}tale cover in the
orbifold sense, multiplicativity of orbifold Euler characteristic
gives
\[
  \chi(\mathcal M')
  =
  e\,\chi^{\mathrm{\mathrm{orb}}}(\mathscr M_{g,n}).
\]
By the Harer--Zagier formula \cite{HarerZagier},
\[
  (-1)^{3g-3+n}\chi^{\mathrm{orb}}(\mathscr M_{g,n})>0.
\]
Since $e>0$, we conclude that
\[
  (-1)^{3g-3+n}\chi(\mathcal M')>0.
\]
It follows that
\[
  (-1)^{3g-3+n}\chi(\mathcal M)>0.
\]

Now let
\[
  f:X\longrightarrow\mathcal M
\]
be a finite surjective morphism of degree $d$, with $X$ smooth. 
Let
$$ f':X' = X\times_{\cM} \cM'\to \cM'$$
Applying Theorem~\ref{thm:main} to the smooth
quasiprojective variety $\mathcal M'$ and its compactification
$\overline{\mathcal M'}$ gives
\[
  (-1)^{3g-3+n}\chi(X')
  \geq
  d\,(-1)^{3g-3+n}\chi(\mathcal M').
\]
By dividing both sides by $\deg q'$, we obtain
\[
  (-1)^{3g-3+n}\chi(X)
  \geq
  d\,(-1)^{3g-3+n}\chi(\mathcal M).
\]
Finally, if $\chi(X)=\chi(\cM)$, then the above inequality implies
$d=1$. Since $f$ is finite of degree one and $\cM$ is smooth, hence
normal, it follows that $f$ is an isomorphism.
\end{proof}

\begin{proof}[Proof of corollary \ref{cor:moduliofav}]
The proof is very similar to the previous argument, so we just outline the main steps.
We may choose a smooth toroidal compactification $\cA\subset {\bar \cA}$ such that the complement is an SNC divisor $D$ \cite{FaltingsChai}. By \cite[lemma 3.3] {ArapuraPatel}, $\Omega^1_{\bar \cA}(\log D)$ is nef. Furthermore, a theorem of Harder \cite{Harder} implies that $(-1)^{g(g+1)/2}\chi(\cA)>0$. The corollary now follows from Theorem \ref{thm:main} and Corollary \ref{cor:rigidity}.

\end{proof}

\end{document}